\documentclass[11pt,a4,fleqn]{article}
\usepackage{graphicx}
\usepackage{amsmath,amssymb,latexsym,graphics,epsfig}
\usepackage{hyperref}
\usepackage{color}
\usepackage{amsthm}

\setlength{\textwidth}{15cm} \setlength{\textheight}{21cm}
\setlength{\topmargin}{0mm} \setlength{\evensidemargin}{7mm}
\setlength{\oddsidemargin}{7mm}

\newtheorem{theorem}{\bf Theorem}[section]

\newtheorem{lemma}[theorem]{\bf Lemma}
\newtheorem{corollary}[theorem]{\bf Corollary}

\newtheorem{claim}[theorem]{\bf Claim }

\newtheorem{definitiona}[theorem]{\bf Definition}
\newtheorem{question}{\bf Question}

\newenvironment{definition}{\begin{definitiona}
\rm 
}{\end{definitiona}}

\numberwithin{equation}{section}

\begin{document}
\title{{\Large Multicolor Size-Ramsey Number of Cycles}}

\author{R. Javadi$^{\textrm{a,b,1}}$, M. Miralaei$^{\textrm{b},2}$, \\[2pt]
{\small $^{\textrm{a}}$Department of Mathematical Sciences, Isfahan University of Technology},\\
{\small Isfahan, 84156-83111, Iran}\\
{\small $^{\textrm{b}}$School of Mathematics, Institute for Research in Fundamental Sciences (IPM),}\\
{\small P.O. Box 19395-5746, Tehran, Iran }\\[2pt]
{rjavadi@iut.ac.ir, m.miralaei@ipm.ir}}

\date{}
\maketitle
 \footnotetext[1] {This research is partially
carried out in the IPM-Isfahan Branch and in part supported
by a grant from IPM (No. 1400050420).} \vspace*{-0.5cm}

 \footnotetext[2] {This research was supported by a grant from IPM.} \vspace*{-0.5cm}

\begin{abstract}
Given a positive integer $ r $, the $ r $-color size-Ramsey number of a graph $ H $, denoted by $ \hat{R}(H, r) $, is the
smallest integer $ m $ for which there exists a graph $ G $ with $ m $ edges such that, in any  edge coloring of $ G $ with $ r $ colors, $G$ contains a monochromatic copy of $ H $.
Haxell, Kohayakawa and  \L uczak showed that the size-Ramsey number of a cycle
$ C_n $ is linear in $ n $ i.e. $ \hat{R}(C_n, r) \leq c_rn $, for some constant $ c_r $. Their proof, however, is based on the Szemer\'edi's regularity lemma  and so no specific constant $ c_r $ is known.
Javadi, Khoeini, Omidi and Pokrovskiy gave an alternative proof for this result which avoids using of the
regularity lemma. Indeed, they proved that if $ n $ is even, then $ c_r $ is exponential in $ r $ and if $ n $ is odd, then $ c_r $ is doubly exponential in $ r $.

\noindent
In this paper, we improve the bound $c_r$ and prove that $c_r$ is polynomial in $r$ when $n$ is even and is exponential in $r$ when $n$ is odd. We also prove that in the latter case, it cannot be improved to a polynomial bound in $r$.
More precisely, we prove that there are some positive constants $c_1,c_2$ such that for every even integer $n$, we have $c_1r^2n\leq \hat{R}(C_n,r)\leq  c_2r^{120}(\log^2 r)n$ and for every odd integer $n$, we have  
 $c_1 2^{r}n \leq \hat{R}(C_n, r)\leq c_22^{16 r^2+2\log r}n $.\\

\noindent{\small Keywords: Ramsey number, Size-Ramsey number, Cycle, Random graph.}\\
{\small AMS subject classification: 05C55, 05D10}

\end{abstract}

\section{Introduction}
For given graphs $ H_1, \ldots, H_{r} $ and a graph $G$, we say that $G$ is Ramsey for $(H_1,\ldots,H_r)$ and we write $ G \longrightarrow (H_1, \dots, H_{r}) $, if for every $ r $-edge coloring of $ G $, with colors $ 1, \dots, r $, the graph $G$ contains a monochromatic copy of $ H_i $ whose all edges are of color $i$.
In other words, for any mapping $ \varphi : E(G)\longrightarrow \{1, \dots, r\} $,  there is some integer $i$, $ 1\leq i\leq r $, and a copy $ H_i' $ of $ H_i $ in $ G $, such that $ \varphi(e)=i $, for each $e\in E(H_i') $.
A natural question is how few vertices can a graph $ G $ have, such that $ G\longrightarrow \big(H_1, \dots, H_{r}\big) $?
Frank P. Ramsey in his seminal paper \cite{Ramsey} studied this question and proved that for given graphs $ H_1, \dots, H_r $, there exists a positive integer $n$ such that for the complete graph $ K_n $ we have 
$ K_n\longrightarrow(H_1, \dots, H_r) $. The smallest such $ n $ is known as \textit{Ramsey number} of $ H_1, \dots, H_r $ and is denoted by $ R(H_1, \dots, H_r) $.
Therefore we can write
\[
R(H_1, \dots, H_r) =\min \{|V (G)| : G \longrightarrow (H_1, \dots, H_r)\}.
\]
Note that, if the minimum is achieved by a graph $ G $, then it is also achieved by a complete graph with $ |V (G)| $ vertices.
Instead of minimizing the number of vertices, one can ask for the minimum number of  edges of such a graph, i.e. can we find a graph which possibly has more vertices than $R(H_1,\ldots,H_r)$, but
has fewer edges and still is Ramsey for $ (H_1, \dots, H_r) $? How many edges suffice to construct a graph which is Ramsey for $ (H_1, \dots, H_r) $? The attempts for answering the last question gives rise to the notion of Size-Ramsey number of graphs. 
Define the \textit{$($multicolor$)$ size-Ramsey number}  
$ \hat{R}(H_1, \dots, H_r) $ to be the minimum number of edges in a graph $ G $, such that $ G $ is Ramsey for $ (H_1, \dots, H_r) $. More formally,
\[
\hat{R}(H_1, \dots, H_r)= \min\{|E(G)| : G\longrightarrow(H_1, \dots, H_r) \}.
\]
In the diagonal case, where $ H_1=\dots = H_r=H  $,
we may write $ {R}(H, r) $ for $ {R}(H_1, \dots, H_r) $, $ \hat{R}(H, r) $ for $ \hat{R}(H_1, \dots, H_r) $ and $G\longrightarrow (H)_r$ for $G\longrightarrow (H_1,\ldots, H_r)$. Moreover, for $ r=2 $ we simply write $\hat{R}(H)$ for $\hat{R}(H,2)$.\\
Since the complete graph on $ R(H_1, \dots, H_r) $ vertices is Ramsey for $(H_1,\ldots, H_r)$, we evidently have 
\begin{align}\label{Trivial}
\hat{R}(H_1, \dots, H_r)\leq \binom{R(H_1, \dots, H_r)}{2}.
\end{align}

The study of size-Ramsey numbers was initiated by Erd\H{o}s, Faudree, Rousseau and Schelp \cite{S.R.Erd} in $ 1978 $.
They introduced the notion of o-sequences as follows:
\begin{definition}
Let $ (H_n)_{n=1}^{\infty} $ be a sequence of graphs. We say that this sequence is o-sequence, whenever
\[
\lim\limits_{n\to \infty}\hat{R}(H_n, r)\binom{R(H_n, r)}{2}^{-1}=0,
\]
For such graphs $H_n$, the trivial upper bound in (\ref{Trivial}) can be substantially improved.
\end{definition}
Let $ P_n $ be the path on $ n $ vertices. The question whether $ (P_n)_{n=0}^{\infty} $
is an o-sequence was put forward in \cite{S.R.Erd}, and in \cite{Erd 1}, where Erd\H{o}s stated the following version
of the problem.
\begin{question}\label{Path}
Prove or disprove that 
\[
\dfrac{\hat{R}(P_n)}{n}\to \infty~~ {\text{and}}~~ \dfrac{\hat{R}(P_n)}{n^2}\to 0?
\]
\end{question}
Beck \cite{Beck 1} answered this question and, using probabilistic method, proved the surprising fact that $\hat{R}(P_n)< 900 n$, i.e. the size-Ramsey number of $P_n$ is linear in $n$. He also raised the question whether the size-Ramsey
number grows linearly in the number of vertices for graphs of bounded degree. This question inspired several pieces of research later. 
The linearity of the size-Ramsey number of bounded degree trees was established by Friedman and Pippenger \cite{Friedman} (see also \cite{Haxell-tree, Xin}). Haxell and Kohayakawa \cite{Haxell-cycle} and separately, Javadi, Khoeini, Omidi and Pokrovskiy \cite{JKhOP} proved the linearity of size-Ramsey number of cycles.  
An affirmative answer to Beck's question was given for other classes of graphs including powers of paths and cycles \cite{Power path} (see \cite{jie} for the multi-color case), powers of bounded degree trees \cite{Power tree} and graphs with bounded maximum degree and bounded treewidth \cite{treewidth}. 
 However, Beck's question was settled in the negative by  R\"{o}dl and Szemer\'{e}di \cite{Rodl},
who provided counterexamples of graphs of order $ n $, maximum degree $ 3 $, and size-Ramsey
number $ \Omega \big(n(\log n)^{1/60}\big) $.\\
In the last decades many successive attempts were done in order to improve the bounds on the size-Ramsey number of paths, with two colors. See
 e.g., \cite{Beck 1, Bela.B, Dudek.1} for lower bounds, and \cite{Beck 1, 137n, Dudek.1, letzter} for upper bounds.
The current best known bounds are $ \big(3.75n-o(1)\big)n\leq \hat{R}(P_n)\leq 74n $ for sufficiently large $n$, where the lower bound is due to Bal and Debiasio \cite{Bal} and the upper bound is obtained by Dudek and Pra\l at \cite{Dudek.1}. 
For the multicolor case, the best known bounds are due to Dudek and Pra\l at \cite{Dudek.2} (see also \cite{Krivel}) who proved that there are positive constants $ c $ and $ C $ such that for every integers $n,r\geq 2$, $ cr^2n\leq \hat{R}(P_n, r)\leq Cr^2(\log r)n $. There are also some pieces of research on the directed version of the problem, see e.g. \cite{sudakov}. \\

In this paper, we focus on the multicolor size-Ramsey number of cycles. The mentioned result by Haxell, Kohayakawa and \L uczak \cite{Haxell-cycle} stating that $ \hat{R}(C_n, r) \leq c_rn $,  uses the regularity lemma and cannot determine how is the dependency of $c_r$ in $r$. Javadi, Khoeini, Omidi and Pokrovskiy \cite{JKhOP}
gave an alternative proof for the linearity of $\hat{R}(C_n,r)$ avoiding the use of the
regularity lemma.
They proved that if $ n $ is even, then $ c_r $ is exponential in $ r $ and if $ n $ is odd, then $ c_r $ is doubly exponential in $ r $. More precisely, they proved that if $ n $ is even, then $ \hat{R}(C_n, r)=O(81^r)n $, and if $ n $ is odd, then $ \hat{R}(C_n, r)=O(35^{2^r})n $, while these bounds are very far from the best known lower bound 
$ \Omega (r^2)n=\hat{R}(P_n, r)\leq \hat{R}(C_n, r) $. 
In this paper, we enlighten the situation and give a more knowledge about the order of magnitude of $c_r$ in $r$.  To be more precise, we prove that there are positive constants $c_1,c_2$ such that for every positive integers $ n,r\geq 2 $, if $n$ is even, then $c_1r^2\,n\leq \hat{R}(C_n,r)\leq  c_2 r^{120} (\log^2 r)\, n$ and if $n$ is odd, then 
$c_12^{r}\, n \leq \hat{R}(C_n, r)\leq  c_2 2^{16 r^2+2\log r}\, n $.
In particular, in the case of even $n$, we actually prove an stronger result. To sate this, we need a definition from \cite{Haxell-tree}. Let a real number $ 0 < \gamma < 1 $  be
fixed, and suppose that $ G $ and $ H $ are arbitrary graphs. We write
$ G \longrightarrow_{\gamma} H $ if any subgraph $ J$ of $ G $ with size $ e(J) \geq  \gamma\, e(G) $ contains an isomorphic copy of $ H $ as a subgraph. It is easy to see that, for every integer $r\geq 2$, if $G\longrightarrow_{1/r} H$ then $G\longrightarrow (H)_r$. We prove that for every integer $r\geq 2$ and every even integer $n$, there exists a graph $G$ on $ O(r^{120} \ln^2 r)\, n$ edges such that $G\longrightarrow_{1/r} C_n$. Note that this result cannot be generalized to the case of odd $n$, since there is no graph $G$ such that $G\longrightarrow_{1/r} C_n$ (see the arguments after the proof of Theorem~\ref{Odd}).
\subsection{Conventions and Notations}
For a graph $ G $, we write $ V(G) $, $ E(G) $ and $ e(G) $ for the vertex set, the edge set and the number of edges of $ G $, respectively. By $|G|$ we mean the number of vertices of $G$. A graph $ G $ with vertex set $ V $ and edge set $ E $ is denoted by $ G=G(V, E) $. 
For $ v \in V(G) $, by $ N_{G}(v) $ we mean the set of all neighbors of $ v $ in $G$ and the degree  of $ v $ is defined as
$ d_{G}(v)=\vert N_{G}(v) \vert $.
For a subset $ X\subseteq V(G) $, we define the neighborhood of $X$ as $ N(X)=N_G(X)=\cup_{x\in X}N_G(x) $. Also, the induced subgraph of $G$ on $X$, denoted by $G[X]$, stands for the graph obtained from $G$ by deleting all vertices in $V(G)\setminus X$.
A rooted tree $T$ with at most two children for each vertex is called the binary tree. 
The maximum distance of any vertex from the root is called the height of $ T $. If a tree has only one vertex (the root), its height is zero.
Moreover, for a tree $ T $ rooted at $ v $, we define $ d_{root}(T)=d_{T}(v) $. \\
Let $ A,B \subset V(G) $, then $ E_G(A,B)= \{ xy\in E(G) : x\in A, y\in B\} $
is the set of edges connecting a vertex of $ A $ to a vertex of $ B $. Also, $e_G(A,B)=|E_G(A,B)|$. A bipartite graph $ G $ with a bipartition $ (V_0, V_1) $ and the edge set $ E $ is denoted by $ G=G(V_0, V_1, E) $.
\section{Tools}
In this section, we provide some results that will be used in the follow-up sections.\\
We shall employ the following standard version of
Chernoff's inequality (see, e.g., \cite{janson2011random} {Corollary~2.3}) on the deviation of the binomial random variable~$ Bin(n,p)$. 
\begin{theorem}[Chernoff's inequality]\label{thm:chernoff}
Let $ X $ be a random variable with binomial distribution $ Bin(n,p)$ and the expectation $ \mathbb{E}(X)=\mu $. 
For every $\delta\in(0,3/2]$ we have 
	\begin{align*}
\mathbb{P}\big(\big|X -\mu)\big|>\delta \mu\big) < 2 \exp\big(-\delta^2\mu/3\big).
	\end{align*}
\end{theorem}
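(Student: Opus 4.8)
The plan is to split $\mathbb{P}(|X-\mu|>\delta\mu)$ into the two one‑sided tails $\mathbb{P}(X\geq(1+\delta)\mu)$ and $\mathbb{P}(X\leq(1-\delta)\mu)$, bound each by $\exp(-\delta^2\mu/3)$, and add them; the union bound is exactly what produces the factor $2$. Throughout, write $X=\sum_{i=1}^{n}X_i$ with $X_1,\dots,X_n$ independent $\mathrm{Bernoulli}(p)$ random variables, so that $\mu=np$.

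For the upper tail I would apply the exponential Markov inequality: for every $t>0$,
\[
\mathbb{P}\big(X\geq(1+\delta)\mu\big)\leq e^{-t(1+\delta)\mu}\,\mathbb{E}\big[e^{tX}\big]=e^{-t(1+\delta)\mu}\prod_{i=1}^{n}\mathbb{E}\big[e^{tX_i}\big].
\]
Since $\mathbb{E}[e^{tX_i}]=1+p(e^t-1)\leq\exp\big(p(e^t-1)\big)$, the bound becomes $\exp\big(\mu(e^t-1)-t(1+\delta)\mu\big)$, and choosing the minimiser $t=\ln(1+\delta)$ gives
\[
\mathbb{P}\big(X\geq(1+\delta)\mu\big)\leq\left(\frac{e^{\delta}}{(1+\delta)^{1+\delta}}\right)^{\mu}.
\]
The symmetric computation with $t=\ln(1-\delta)<0$ yields $\mathbb{P}\big(X\leq(1-\delta)\mu\big)\leq\big(e^{-\delta}(1-\delta)^{-(1-\delta)}\big)^{\mu}$, where only $\delta\in(0,1)$ is meaningful because $X\geq 0$.

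It then remains to bound the two exponents. For the upper tail the claim is $e^{\delta}(1+\delta)^{-(1+\delta)}\leq e^{-\delta^2/3}$ on $(0,3/2]$, equivalently $g(\delta):=(1+\delta)\ln(1+\delta)-\delta-\delta^2/3\geq 0$ there. I would verify this by noting $g(0)=g'(0)=0$ and $g''(\delta)=\tfrac{1}{1+\delta}-\tfrac{2}{3}$, so that $g$ is convex, hence nonnegative, on $[0,\tfrac12]$; while on $[\tfrac12,\tfrac32]$ the function $g'$ is decreasing with $g'(\tfrac12)>0>g'(\tfrac32)$, so $g$ rises and then falls there and thus stays above $\min\{g(\tfrac12),g(\tfrac32)\}\geq 0$ (one checks $g(\tfrac32)>0$ directly). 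The lower‑tail exponent satisfies the even slightly stronger $e^{-\delta}(1-\delta)^{-(1-\delta)}\leq e^{-\delta^2/2}$ on $(0,1)$, proved by the same second‑derivative bookkeeping. Putting the pieces together gives $\mathbb{P}(|X-\mu|>\delta\mu)<2\exp(-\delta^2\mu/3)$.

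The only real subtlety is the analytic step: the textbook form of this estimate is usually stated for $\delta<1$, and keeping the constant $1/3$ in the upper tail all the way up to $\delta\leq 3/2$ is precisely why one needs the little convexity‑plus‑endpoint argument above instead of a one‑line Taylor estimate. Everything else — the independence, the moment generating function of a Bernoulli variable, and the optimisation over $t$ — is routine; being a standard bound, it is in fact simply quoted from \cite{janson2011random} in the paper.
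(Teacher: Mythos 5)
Your proof is correct. The paper does not prove this statement at all --- it is quoted verbatim from Janson, \L uczak and Rucinski \cite{janson2011random} (Corollary~2.3) --- so there is nothing to compare against except the standard argument, which is exactly what you give: exponential Markov plus the optimal choice $t=\ln(1\pm\delta)$, followed by the elementary verification that $(1+\delta)\ln(1+\delta)-\delta\geq\delta^2/3$ on $(0,3/2]$ and $(1-\delta)\ln(1-\delta)+\delta\geq\delta^2/2$ on $(0,1)$. You correctly identify and handle the only non-routine point, namely that the constant $1/3$ survives up to $\delta=3/2$ (where the usual $\delta<1$ Taylor argument does not apply), via the sign change of $g''$ at $\delta=1/2$ and the endpoint checks $g(1/2),g(3/2)>0$.
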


Recall that the binomial random
graph $ G(N, p) $ is a distribution over the class of graphs with vertex set $ [N]=\{1, \dots, N\} $ in which every
pair $ \{i, j\} \in \binom{[N]}{2} $ appears independently as an edge in $ G $ with probability $ p=p(N) $, which may (and
usually does) tend to zero as $ N $ tends to infinity.
Similarly, The binomial random bipartite graph $ G(N, N, p) $ is a distribution over the class of bipartite graphs $ G=G(V_0, V_1, E) $ with 
$ |V_0|=|V_1|=N $ in which every pair $ \{i, j\}\in V_0\times V_1 $ appears as an edge independently with probability $ p $. 
Furthermore, we say that an event $ A_N $ in
a probability space holds asymptotically almost surely (or whp), if the probability that $ A_N $ holds tends to $ 1 $ as $ N $ goes to infinity.

In the following lemma, we prove that there exist bipartite graphs whose local densities are similar to the expectation of local densities in random bipartite graphs (for similar results see \cite{krivel}).

\begin{lemma}\label{quasi}
Let $c_1$ be a positive integer and $c_2,c_3,\epsilon, \delta$ be positive numbers such that $c_3\leq c_1$, $0<\epsilon\leq 1/2$ and $3/2\geq \delta >\sqrt{6{\ln({c_1e}/{c_3})/(c_2c_3)}} $. Then, there exists $n_0=n_0(c_1,c_2,c_3,\epsilon, \delta)$, where the following holds for every $n\geq n_0$. \\
Let $ N=c_1n $ and $ p={c_2/n} $. 
There exists a bipartite graph $ G=G(V_0,V_1,E) $, with $ |V_i|=N $, $ i\in\{0,1\} $, such that
\begin{enumerate}
\item \label{quasi 1} $ (1-n^{(\epsilon-1/2)}) pN^2 \leq |E(G)|\leq (1+n^{(\epsilon-1/2)}) pN^2 $,
\item \label{quasi 2} For every two subsets $ U\subseteq V_0 $ and $ W\subseteq V_1 $ with $ |U|=u$ and $ |W|=w$, where $u,w\geq c_3n $, we have
\[
\big| e_G(U,W) -puw\big|\leq \delta puw.
\]
\end{enumerate} 
\end{lemma}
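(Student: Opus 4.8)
The plan is to show that a \emph{random} bipartite graph $G\sim G(N,N,p)$ with $N=c_1n$ and $p=c_2/n$ satisfies both conclusions asymptotically almost surely; since each failure probability tends to $0$, their sum drops below $1$ for all $n$ beyond some threshold $n_0=n_0(c_1,c_2,c_3,\epsilon,\delta)$, and for such $n$ some outcome of $G(N,N,p)$ is the desired graph.

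For item~\ref{quasi 1}, observe that $|E(G)|$ has distribution $Bin(N^2,p)$ with expectation $\mu=pN^2=c_1^2c_2n$. Applying Theorem~\ref{thm:chernoff} with deviation parameter $\delta_0:=n^{\epsilon-1/2}$ (which lies in $(0,3/2]$ since $\epsilon\le 1/2$ and $n\ge 1$), the probability that $|E(G)|$ deviates from $\mu$ by more than $\delta_0\mu$ is less than $2\exp(-\delta_0^2\mu/3)=2\exp(-c_1^2c_2\,n^{2\epsilon}/3)$, which tends to $0$ because $\epsilon>0$. Hence item~\ref{quasi 1} holds a.a.s.

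For item~\ref{quasi 2}, fix $U\subseteq V_0$ and $W\subseteq V_1$ with $|U|=u$, $|W|=w$ and $u,w\ge c_3n$. Then $e_G(U,W)\sim Bin(uw,p)$ with expectation $puw$, so by Theorem~\ref{thm:chernoff} applied with the fixed $\delta\le 3/2$ from the hypothesis,
\[
\mathbb{P}\big(|e_G(U,W)-puw|>\delta puw\big)<2\exp(-\delta^2 puw/3).
\]
The crucial point is the number of relevant pairs $(U,W)$: since $u\ge c_3n$ and $N=c_1n$, we have $\binom{N}{u}\le (eN/u)^u\le (ec_1/c_3)^u$, and likewise $\binom{N}{w}\le (ec_1/c_3)^w$. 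Moreover $2uw\ge c_3n(u+w)$, so $puw=(c_2/n)uw\ge \tfrac12 c_2c_3(u+w)$, whence the union bound over all admissible $(U,W)$ is at most
\[
\sum_{u\ge c_3n}\sum_{w\ge c_3n}\binom{N}{u}\binom{N}{w}\,2\exp(-\delta^2 puw/3)\;\le\;2\sum_{u\ge c_3n}\sum_{w\ge c_3n}q^{\,u+w},\qquad q:=\frac{ec_1}{c_3}\exp\!\Big(-\frac{\delta^2 c_2c_3}{6}\Big).
\]
The hypothesis $\delta>\sqrt{6\ln(c_1e/c_3)/(c_2c_3)}$ is exactly equivalent to $q<1$; with $q<1$ the double sum is bounded by $2\big(q^{\lceil c_3n\rceil}/(1-q)\big)^2$, which tends to $0$ as $n\to\infty$. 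Thus item~\ref{quasi 2} also holds a.a.s., completing the argument.

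The main obstacle is quantitative rather than conceptual. The naive count $4^{N}=2^{2c_1n}$ of all pairs of subsets is far too large to be absorbed by the Chernoff tail $\exp(-\delta^2 puw/3)$, so one must use that only sets of size at least $c_3n$ matter, replacing $2^{2c_1n}$ by the much smaller entropy factor $(ec_1/c_3)^{u+w}$; it is precisely the balance between this factor and the exponent (after bounding $puw$ below by $\tfrac12 c_2c_3(u+w)$) that produces the stated lower bound on $\delta$, and keeping the estimates uniform over all $u,w\ge c_3n$ — rather than only the extremal choice $u=w=c_3n$ — is what the geometric-series summation handles.
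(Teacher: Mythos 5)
Your proposal is correct and follows essentially the same route as the paper: take $G(N,N,p)$, apply Chernoff to $e(G)$ and to each $e_G(U,W)$, and beat the union bound using $\binom{N}{k}\le (Ne/k)^k$ together with the restriction $u,w\ge c_3n$, which is exactly where the hypothesis on $\delta$ enters. The only (cosmetic) difference is in the bookkeeping of the double sum — you linearize via $uw\ge \tfrac{c_3n}{2}(u+w)$ and sum a geometric series, while the paper bounds the summand by showing the exponent $h(u/n,w/n)$ is decreasing and evaluating at $u=w=c_3n$.
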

\begin{proof}
Consider the random bipartite graph $ G=G(N, N, p) $ with bipartition $ (V_0,V_1) $ such that $|V_0|=|V_1|=N$ and each edge $v_0v_1$, for $v_0\in V_0,v_1\in V_1$, exists independently with probability $p$, where $N,p$ are as in the statement of the lemma. We prove that a.a.s. $G$ satisfies Properties \ref{quasi 1} and \ref{quasi 2}. First assume that $ U\subseteq V_0 $ and $ W\subseteq V_1 $ are two fixed subsets with $ u=|U|\geq c_3n $ and $ w=|W|\geq c_3n $. Let $ X_{U,W} $ be the random variable that counts the number of edges between $ U $ and $ W $, i.e. $X_{U,W}=e_G(U,W)$. Clearly, $ X_{U,W} $ has binomial distribution $ Bin(uw, p) $ and its expectation is $ \mathbb{E}(X_{U,W})= puw $. By applying Chernoff's inequality in Theorem \ref{thm:chernoff}, the probability that Property \ref{quasi 2} fails to be hold for a fixed choice of $ U $ and $ W $ is bounded from above as follows.
\begin{align*}
	\mathbb{P}\big(\big| X_{U,W} -puw\big|> \delta puw\big) &\leq 2e^{-\dfrac{\delta ^2 puw}{3}} = 2e^{-\dfrac{\delta ^2 c_2 uw}{3n}}.
\end{align*}	
Now, by taking union bound on all choices of $ U $ and $ W $ and using inequality $ \binom{N}{k}\leq (\frac{Ne}{k})^k $, we arrive at 
\begin{align*}
\mathbb{P}(\text{Property \ref{quasi 2} fails}) &\leq  \sum_{u= \lceil c_3n \rceil}^{N}\sum_{w= \lceil c_3n \rceil}^{N}\binom{N}{u}\binom{N}{w}2e^{-\dfrac{\delta ^2 c_2 uw}{3n}}\\
&\leq \sum_{u= \lceil c_3n \rceil}^{N}\sum_{w= \lceil c_3n \rceil}^{N} \big(\frac{Ne}{u}\big)^u\big(\frac{Ne}{w}\big)^w 2e^{-\dfrac{\delta ^2 c_2 uw}{3n}}\\
&=2\sum_{u= \lceil c_3n \rceil}^{N}\sum_{w= \lceil c_3n \rceil}^{N} e^{h(\frac{u}{n}, \frac{w}{n})n},
\end{align*}
where 
\[
h(x, y):=  x \ln(\frac{c_1e}{x})+y \ln(\frac{c_1e}{y})-\frac{\delta ^2}{3}c_2 xy.
\]
One can verify that the function $ h(x,y) $ is decreasing for both $x\in [c_3,c_1]$ and $y\in [c_3,c_1]$. To see this, note that
\[\frac{\partial h}{\partial x} = \ln (\frac{c_1}{x})-\frac{\delta ^2}{3}c_2y \leq \ln (\frac{c_1}{c_3})-\frac{\delta ^2}{3}c_2c_3<0,
\]
which is the case by the choice of $\delta$.
Similarly, $\partial h/\partial y<0$ and therefore, we have 
\[
\mathbb{P}\big(\text{Property} ~ \ref{quasi 2} ~\text{fails}\big) \leq 2N^2 e^{h(c_3,c_3)n}=2c_1^2 n^2 e^{h(c_3,c_3)n}.
\]
 Now, if we choose $ \delta $ so that
$ \delta ^2>6\dfrac{\ln({c_1e}/{c_3})}{c_2c_3}  $, then $ h(c_3, c_3)<0 $ and we have
\[
\mathbb{P}\big(\text{Property} ~ \ref{quasi 2} ~\text{fails}\big)=o(1).
\]
In order to see Property \ref{quasi 1}, note that $ e(G) $ has binomial distribution $ Bin(N^2, p) $ with expectation $ \mathbb{E}(e(G))=pN^2=c_1^2 c_2n $.
Again, applying Chernoff's inequality, we have 
\begin{align*}
\mathbb{P}\big(\text{Property} ~ \ref{quasi 1} ~\text{fails}\big)=\mathbb{P}\big( |e(G)-pN^2|> n^{(\epsilon-1/2)} pN^2\big)&\leq e^{-\frac{  n^{(2\epsilon-1)} pN^2}{3}}\\
&=e^{-\dfrac{n^{2\epsilon} c_1^2 c_2}{3}}= o(1).
\end{align*}
Thus, again by taking union bound, the probability that Properties \ref{quasi 1} or \ref{quasi 2} fail is in $o(1)$. This completes the proof.
\end{proof}
The following lemma is the non-bipartite counterpart of Lemma~\ref{quasi}. The proof is similar and so we  omit it.
\begin{lemma}\label{quasi-graph}
	Let $c_1$ be a positive integer and $c_2,c_3,\epsilon, \delta$ be positive numbers such that $2c_3\leq c_1$, $0<\epsilon<1$ and $ 3/2\geq \delta >\sqrt{6{\ln(\frac{c_1e}{c_3})/(c_2c_3)}} $. Then, there exists $n_0=n_0(c_1,c_2,c_3,\epsilon, \delta)$, where the following holds for every $n\geq n_0$. \\
	Let $ N=c_1n $ and $ p={c_2/n} $. 
	There exists a graph $ G=G(V,E) $, with $ |V|=N $, such that
	\begin{enumerate}
		\item \label{quasi 1} $ (1-\epsilon) \frac{c_1 c_2 (c_1n-1)}{2}\leq |E(G)|\leq (1+\epsilon) \frac{c_1 c_2 (c_1n-1)}{2}$,
		\item \label{quasi 2} For every two disjoint subsets $ U,W\subseteq V $ with $ |U|=u$ and $ |W|=w$, where $u,w\geq c_3n $, we have
		\[
		\big| e_G(U,W) -puw\big|\leq \delta puw.
		\]
\end{enumerate}   
\end{lemma}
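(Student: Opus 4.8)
The plan is to follow the probabilistic argument of Lemma~\ref{quasi} essentially verbatim, making only the bookkeeping changes forced by working with a graph rather than a bipartite graph. I would take $G=G(N,p)$ to be the binomial random graph on vertex set $V=[N]$, with $N=c_1n$ and $p=c_2/n$, and show that a.a.s.\ it satisfies both properties. For Property~\ref{quasi 2}, fix two disjoint subsets $U,W\subseteq V$ with $u=|U|\geq c_3n$ and $w=|W|\geq c_3n$; the hypothesis $2c_3\leq c_1$ guarantees that such pairs exist, and disjointness forces $u,w\leq(c_1-c_3)n\leq c_1n$. Since $U$ and $W$ are disjoint, $e_G(U,W)$ is a sum of $uw$ independent indicator variables, so $e_G(U,W)\sim \mathrm{Bin}(uw,p)$ with mean $puw$, and Chernoff's inequality (Theorem~\ref{thm:chernoff}) gives $\mathbb{P}\big(|e_G(U,W)-puw|>\delta puw\big)\leq 2e^{-\delta^2 c_2 uw/(3n)}$. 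Taking a union bound over all choices of $U$ and $W$, bounding the number of ordered disjoint pairs by $\binom{N}{u}\binom{N}{w}$ and using $\binom{N}{k}\leq(Ne/k)^k$, one arrives at exactly the same expression $2\sum_{u,w}e^{h(u/n,w/n)n}$ with
\[
h(x,y)=x\ln\big(\tfrac{c_1e}{x}\big)+y\ln\big(\tfrac{c_1e}{y}\big)-\tfrac{\delta^2}{3}c_2xy
\]
as in the bipartite proof. The same computation of $\partial h/\partial x$ and $\partial h/\partial y$, valid for $x,y\in[c_3,c_1]$, shows $h$ is decreasing in each variable on this range, so the sum is at most $2N^2e^{h(c_3,c_3)n}=2c_1^2n^2e^{h(c_3,c_3)n}$; the hypothesis $\delta^2>6\ln(c_1e/c_3)/(c_2c_3)$ makes $h(c_3,c_3)<0$, hence $\mathbb{P}(\text{Property~\ref{quasi 2} fails})=o(1)$.

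For Property~\ref{quasi 1}, the only genuine change is that now $e(G)\sim\mathrm{Bin}\big(\binom{N}{2},p\big)$, so $\mathbb{E}(e(G))=p\binom{N}{2}=\frac{c_1c_2(c_1n-1)}{2}$, which is precisely the quantity appearing in the statement. Since $0<\epsilon<1\leq 3/2$, Chernoff's inequality applies with deviation parameter $\epsilon$ and yields $\mathbb{P}\big(|e(G)-\mathbb{E}(e(G))|>\epsilon\,\mathbb{E}(e(G))\big)\leq 2e^{-\epsilon^2 c_1c_2(c_1n-1)/6}=o(1)$. A final union bound shows that a.a.s.\ $G$ satisfies both properties, so for every sufficiently large $n$ a graph with the required properties exists, which is the assertion.

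I do not expect a real obstacle here, since the argument is a transcription of the bipartite case; the only two points needing care are (i) that disjointness of $U$ and $W$ — which is what keeps the $uw$ edge-indicators independent — is compatible with both sets having size at least $c_3n$, and this is exactly why the hypothesis reads $2c_3\leq c_1$ rather than $c_3\leq c_1$; and (ii) that $pN^2$ must be replaced by $p\binom{N}{2}$ in the edge-count estimate, with the stated bound phrased directly in terms of this expectation (a constant-factor error $\epsilon$ suffices, so the finer $n^{\epsilon-1/2}$-type term used in Lemma~\ref{quasi} is not needed).
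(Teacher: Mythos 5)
Your proposal is correct and is exactly the adaptation the paper has in mind: the paper omits the proof of this lemma, stating only that it is similar to that of Lemma~\ref{quasi}, and your transcription handles the two genuine changes properly (disjointness of $U,W$ keeps the $uw$ edge-indicators independent, and the expectation $p\binom{N}{2}=\frac{c_1c_2(c_1n-1)}{2}$ matches the bound in Property~1, with $\epsilon<1\leq 3/2$ making Chernoff applicable). No gaps.
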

Extending a tree-embedding result of Friedman and Pippenger \cite{Friedman}, Balogh, Csaba and Samotij \cite{Balogh}
proved that all graphs satisfying certain expansion properties contain all trees with bounded maximum degree and certain number of vertices.
\begin{theorem}\label{balogh} {\rm \cite{Balogh}}
Let $ D,m_0,M_0,m_1 $ and $ M_1 $ be positive integers. Assume that $ H $ is a non-empty bipartite graph with bipartition $ (V_0,V_1) $ satisfying the following conditions.
\begin{enumerate}
\item For every $ X \subseteq V_i $ with $ 0<|X| \leq m_i $, $ |N_H(X)| \geq D|X| + 1 $ for $ i \in\{0,1\} $.
\item For every $ X \subseteq V_i $ with $ m_i < |X| \leq 2m_i $, $ |N_H(X)| \geq D|X| + M_{1-i} $ for $ i \in\{0,1\} $.
\end{enumerate}
Then $ H $ contains every tree $ T $ with the maximum degree at most $ D $ and the bipartition with  parts of sizes $ M_0 $ and $ M_1 $.
\end{theorem}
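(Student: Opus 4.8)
The plan is to prove Theorem~\ref{balogh} by the Friedman--Pippenger vertex-by-vertex embedding method: embed the vertices of $T$ into $H$ one at a time, in a breadth-first order from a fixed root, while maintaining throughout a ``reservoir'' of unused vertices near the frontier of the partial embedding, large enough to guarantee room for the part of $T$ not yet placed. The bipartition is respected by always sending a vertex of the part $W_0$ of $T$ (with $|W_0|=M_0$) into $V_0$ and a vertex of the part $W_1$ (with $|W_1|=M_1$) into $V_1$; the two parametrised conditions in the statement are precisely what makes this possible on each of the two sides.

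Concretely, fix a root $r$ of $T$, say $r\in W_0$, and list $V(T)=\{t_1,\dots,t_{M_0+M_1}\}$ so that $t_1=r$ and each $t_j$ with $j\ge 2$ has its unique parent $p(t_j)$ among $\{t_1,\dots,t_{j-1}\}$. We build injective partial embeddings $\varphi_0\subseteq\varphi_1\subseteq\cdots$, where $\varphi_j$ is defined on $\{t_1,\dots,t_j\}$, preserves adjacency, and maps $W_i$-vertices into $V_i$. Alongside $\varphi_j$ we keep, for every already-embedded vertex $t_i$ that still has an unembedded child, a set $B_i\subseteq N_H(\varphi_j(t_i))$; we require the $B_i$ to be pairwise disjoint and disjoint from $\mathrm{Im}(\varphi_j)$, and we maintain $|B_i|$ to be at least a suitable function of the size of the still-unembedded subtree hanging from $t_i$ (essentially enough room for the descendants of $t_i$ to branch $D$-arily). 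Call a partial embedding together with such reservoirs \emph{good}. The crux is an extension lemma: every good configuration on $\{t_1,\dots,t_j\}$ can be extended to a good configuration on $\{t_1,\dots,t_{j+1}\}$ by choosing an image $v\in B_{p(t_{j+1})}$ for $t_{j+1}$ (so the new edge to its parent lies in $H$), removing $v$ from that reservoir, and carving out a fresh reservoir $B_{j+1}\subseteq N_H(v)$ of the prescribed size, disjoint from $\mathrm{Im}(\varphi_{j+1})$ and from all surviving reservoirs.

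The existence of this fresh reservoir is exactly where hypotheses (1) and (2) are used. Consider the union $X$ of the reservoirs currently living on the relevant side $V_i$ (together with the newly used vertex $v$); the vertices of $\mathrm{Im}(\varphi_{j+1})$ on the opposite side $V_{1-i}$, plus the reservoirs on that side, are the only vertices $B_{j+1}$ must avoid inside $N_H(X)\subseteq V_{1-i}$. While $|X|\le m_i$, condition (1) gives $|N_H(X)|\ge D|X|+1$, and a short Hall-type / greedy count over the at most $D$ reservoirs being drawn from shows this $D$-fold expansion beats the number of forbidden vertices, leaving room to refill. Once the total reserved mass on side $i$ grows into the range $m_i<|X|\le 2m_i$ --- which happens only near the end of the process --- condition (2) takes over and its extra additive term $M_{1-i}$ is exactly enough to guarantee that the at most $M_{1-i}$ vertices still to be placed on the opposite side can all be accommodated. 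A monotonicity check (reservoir sizes never exceed $2m_i$, the image never exceeds $M_i$ on side $i$, and the two ranges of $|X|$ together cover everything that occurs) shows the invariant survives every step, so after $M_0+M_1$ steps $\varphi$ is a full embedding of $T$ in $H$.

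The main obstacle is designing the invariant itself, i.e. choosing the target reservoir sizes $|B_i|$ so that three competing demands hold at once: a child of $t_i$ must always be extractable from $B_i$ while still leaving enough room for \emph{that} child's own subtree; the sets $X$ arising in the extension step must stay within the ranges where (1) or (2) applies; and the reservoirs must remain disjoint from each other and from the ever-growing image, which is what forces the tight additive terms $+1$ and $+M_{1-i}$ in the two conditions. Making these inequalities close simultaneously, and in particular handling the transition at $|X|\approx m_i$ and the last few breadth-first levels where the $M_{1-i}$ slack is indispensable, is the delicate part; granting that, the argument is a routine induction on $j$.
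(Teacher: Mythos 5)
The paper does not prove this theorem at all; it is quoted from Balogh, Csaba and Samotij, and the paper only proves (in the appendix) the forest generalization, Theorem~\ref{forst-bipartite}, by a Friedman--Pippenger-type induction with \emph{critical sets}. Measured against either that proof or the original one, your proposal has a genuine gap: the entire quantitative content of the argument --- the choice of invariant --- is deferred. You write that designing the reservoir sizes so that the competing demands ``close simultaneously'' is ``the delicate part'' and then grant it; but that delicate part \emph{is} the proof. In the standard argument the invariant is not a family of disjoint per-vertex reservoirs at all, but a single global defect-Hall condition maintained for \emph{every} set $X\subseteq V_i$ with $|X|\le m_i$, of the form $|N_H(X)\setminus \mathrm{Im}(\varphi)|\ge \sum_{x\in X}(\text{remaining degree capacity of }x)$, and the extension step is proved by contradiction: if no neighbor $v$ of the parent's image admits an extension, each candidate $v$ yields a violating set $C_v$, one shows each $C_v$ is critical (i.e.\ attains equality), that the union of critical sets is critical, and that the union of all the $C_v$'s together with the parent's image violates condition (2). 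Your sketch contains no counterpart of this closure-under-union step, which is where the additive terms $+1$ and $+M_{1-i}$ are actually consumed.

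Moreover, the specific reservoir design you describe cannot work as stated. You propose carving $B_{j+1}\subseteq N_H(v)$ of size comparable to the still-unembedded subtree below $t_{j+1}$; but condition (1) applied to $X=\{v\}$ only guarantees $|N_H(v)|\ge D+1$, and a graph satisfying the hypotheses may have every degree exactly $D+1$. A single vertex's neighborhood therefore cannot host a subtree-sized reservoir, so the invariant ``each active vertex has reserved room for its whole pending subtree'' is unachievable from step one. If instead you shrink the reservoirs to size $O(D)$ per active vertex, the disjointness of the reservoir system becomes a system-of-distinct-representatives problem over all subsets of active vertices, which is exactly the global Hall-type condition you were trying to avoid, and the claim that such reservoirs ``guarantee room for the part of $T$ not yet placed'' no longer follows. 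To repair the argument you would need to replace the reservoirs by the global expansion invariant and supply the critical-set (or equivalent deficiency-version-of-Hall) argument for the extension step.
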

The above theorem can be generalized to forests. The following theorem enables us to embed a bounded degree forest with prescribed roots into a graph with an appropriate expansion property. 
This theorem is an extension of Theorem \ref{balogh} and is indeed a modified version of Lemma $ 3.1 $ in \cite{Ramsey good} with a similar proof. However, for the sake of completeness, we include a proof in Appendix \ref{app}.
\begin{theorem}\label{forst-bipartite}
	Let $ \Delta, M, t $ and $ m $ be given positive integers. Let $ G=G(V_0, V_1, E) $ be a bipartite graph and $ X=\{x_1, \dots, x_t\} $ be a subset of $ V_0\cup V_1 $. Also, let $ T_{x_1}, \dots, T_{x_t} $ be $t$ rooted trees satisfying $ \sum_{i=1}^{t}|T_{x_i}|\leq M $ and $ \Delta(T_{x_i})\leq \Delta $ for all $ i\in\{1,\ldots ,t\} $. Suppose that we have the following properties in $ G $. 
	\begin{enumerate}
		\item For all $ S\subseteq V_i $, $ i\in \{0, 1\} $, with $ |S|\leq m $ we have 
		\begin{align}\label{A.1}
		|N_G(S)\setminus X|\geq 2\Delta |S\setminus X|+ \sum_{x\in S\cap X}(d_{root}(T_x)+\Delta),
		\end{align}
		\item for all $ S\subseteq V_i $, $ i\in \{0, 1\} $,  with $ m\leq |S|\leq 2m $ we have $ |N_G(S)|\geq M+8\Delta m $.
	\end{enumerate}
	Then we can find disjoint copies of the trees $ T_{x_1}, \dots, T_{x_t} $ in $ G $ such that for each $ i $, $ T_{x_i} $ is rooted at $ x_i $.  
\end{theorem}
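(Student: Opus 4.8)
The plan is to prove Theorem~\ref{forst-bipartite} by iteratively embedding the trees $T_{x_i}$ one vertex at a time, following the Friedman–Pippenger strategy as refined in \cite{Balogh, Ramsey good}. At a high level, I would build up a partial embedding $\phi$ of a growing forest $F\subseteq \bigcup_i T_{x_i}$, maintaining throughout an invariant that captures "room to extend": namely that the image $\phi(F)$, together with whatever prescribed roots have not yet been absorbed, does not consume too much of the neighborhood-expansion guaranteed by hypotheses (1) and (2). The key quantitative device is a potential or "debt" function assigned to the currently embedded vertices — typically one charges $\Delta$ to each already-embedded vertex that still has unembedded children, plus the appropriate root-degree contributions $d_{root}(T_x)$ for roots $x\in X$ not yet reached — and one shows this debt never exceeds the expansion budget $2\Delta|S\setminus X| + \sum_{x\in S\cap X}(d_{root}(T_x)+\Delta)$ afforded by~(\ref{A.1}) on small sets, nor the bulk bound $M+8\Delta m$ on medium sets.

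First I would set up the notation: order the vertices of the forest so that each tree is processed in BFS order from its root $x_i$, and the roots are placed exactly at the prescribed vertices $x_i\in V_0\cup V_1$ (this is where the asymmetry between small sets $S$ with $|S\setminus X|$ counted separately matters — the prescribed roots are already "in place" and must not be double-counted in the expansion demand). Then, at a generic step, I want to embed the children of some already-placed vertex $v=\phi(u)$ into $N_G(v)$, avoiding the rest of the current image and the not-yet-used roots. The crux is a Hall-type / defect argument: one shows that the set of "available" neighbors is large enough because, if it were not, some set $S$ of embedded-or-reserved vertices would violate the expansion hypothesis. I would isolate this as a sublemma — essentially the statement that a graph with the expansion in (1)–(2) has the property that any partial embedding respecting the debt invariant can be extended by one vertex — and prove it by the standard case split on whether the relevant witness set $S$ has size $\le m$, lies in $(m,2m]$, or exceeds $2m$ (the last case being impossible once $M+8\Delta m$ is large enough relative to the total forest size $M$, since at most $M$ vertices are ever embedded).

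The main obstacle, and the part requiring genuine care rather than bookkeeping, is the handling of the prescribed roots inside the expansion hypothesis~(\ref{A.1}): the term $\sum_{x\in S\cap X}(d_{root}(T_x)+\Delta)$ means that when a set $S$ contains a root that has not yet been "activated," we must already have reserved enough neighborhood for all of its eventual children, while simultaneously not having spent that neighborhood on other trees. Getting the invariant to simultaneously (a) reserve $d_{root}(T_x)+\Delta$ slots for each dormant root, (b) charge only $2\Delta$ per ordinary embedded vertex with pending children, and (c) still close the induction is the delicate point; it is exactly the modification over Theorem~\ref{balogh} that \cite{Ramsey good}'s Lemma~3.1 introduces, so I would mirror that argument, adapting constants. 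I expect the remaining steps — verifying that $2m$-sized sets suffice, that $M+8\Delta m$ dominates the worst case, and that the bipartite side-parity is respected since $G$ is bipartite and each $T_{x_i}$ maps consistently to the two sides — to be routine. Since the paper defers this to Appendix~\ref{app} "for the sake of completeness" with "a similar proof" to \cite{Ramsey good}, I would keep the write-up close to that source rather than reinventing the potential function.
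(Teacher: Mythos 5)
Your overall strategy coincides with the paper's: process the forest one edge at a time while maintaining the expansion inequality \eqref{A.1} as the ``debt'' invariant, with each prescribed root $x\in X$ carrying the reserved quantity $d_{root}(T_x)+\Delta$ and each ordinary vertex charged $2\Delta$; the paper's Appendix~\ref{app} is precisely the argument of Lemma~3.1 of \cite{Ramsey good} that you propose to mirror, so the plan is sound in outline.

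The one place where your sketch is too vague --- and, read literally, would not close --- is the extension step. You describe it as ``a Hall-type / defect argument: one shows that the set of available neighbors is large enough because, if it were not, some set $S$ \dots would violate the expansion hypothesis.'' But the obstruction to extending is not that too few neighbors of the current parent $x_1$ are unused: a neighbor $v\in N(x_1)\setminus X$ can be perfectly available and still be a bad choice, because promoting $v$ to a new root (passing to $X^v=X\cup\{v\}$ with the detached subtree $T^v_v$ reserved at $v$) may destroy \eqref{A.1} for some set $S$ containing $v$. The actual mechanism is: assume every $v\in N(x_1)\setminus X$ is bad and let $C_v$ be a witness set for $v$; show that each $C_v$ is \emph{critical} (equality holds in \eqref{A.1}), that $v\in N(C_v)$, and that $x_1\notin C_v$; prove that the union of two critical sets is again critical, using submodularity of $S\mapsto |N(S)\setminus X|$ together with hypothesis (2) to force the union back below size $m$; and finally show that $C=\bigcup_v C_v$ together with $x_1$ violates \eqref{A.1}, so that $|C\cup\{x_1\}|=m+1$, which contradicts hypothesis (2). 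This union-of-critical-sets step is the only non-routine idea in the proof, and it is exactly where the two-tier hypothesis (sets of size at most $m$ versus sizes in $(m,2m]$) and the constant $M+8\Delta m$ are consumed. Your proposal points at the correct source for this step but does not reproduce it, and the ``defect Hall'' framing as stated would not produce it.
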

\section{Multicolor size-Ramsey number of even cycles}
In this section, we prove that for even integer $n$, $\hat{R}(C_n,r)= O(r^{120}\log^2 r)n$. Indeed, we prove a stronger result which states that there exists a graph $G$ with $ O(r^{120}\log^2 r)n $ number of edges such that $G\to_{1/r} C_n$ (for the definition, see the introduction).

Our approach is to prove that every bipartite graph with an appropriate connectivity characteristic fulfills an expansion property and thus contains an even cycle. Then, we prove that in every random bipartite graph $G$, whp every dense subgraph of $G$ has the mentioned connectivity characteristic and this leads us to the main result.  First, we need a definition.
\begin{definition}
	Let $ G = (V_0, V_1 ,E) $ be a bipartite graph with  $ |V_0|=|V_1|=N $ vertices on each part and let $ \alpha $ be a positive integer. The bipartite graph $ G $ is called an
	 $\alpha$-joined graph if for every pair of vertex subsets $ A\subseteq V_0 $ and $ B\subseteq V_1 $ with sizes $ |A|, |B| \geq \alpha N  $, we have $e(A,B)\neq 0$.
\end{definition}

The following lemma  shows that every $\alpha$-joined bipartite graph $ G $ contains an induced subgraph $ G' $ with an strong expansion property. A similar result has been proved in \cite{Krivel}.
\begin{lemma}\label{expander}
Let $0<\alpha <1 $ and $ G=(V_0, V_1, E) $ be an $ \alpha $-joined bipartite graph with $ |V_0|=|V_1|=N $. Then $ G $ contains an induced subgraph $ G'=(V_0', V_1', E') $ that satisfies the following properties.
\begin{enumerate}
\item $ |V_0'|, |V_1'|\geq (1-\alpha)N $.
\item For every $ U \subseteq V_i' $, $ i \in\{0, 1\} $, with $ 0<|U| \leq \alpha N$, we have $ |N_{G'}(U)| > \frac{1-2\alpha}{2\alpha}|U| $.
\item For every $ U \subseteq V_i' $, $ i \in\{0, 1\} $, with $  |U|>\alpha N $, we have $ |N_{G'}(U)| > (1-2\alpha)N $.
\end{enumerate}
\end{lemma}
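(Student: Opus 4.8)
The plan is to build $G'$ from $G$ by iteratively removing poorly-expanding sets, and then to bound the total amount removed using only the $\alpha$-joined hypothesis. Call a nonempty set $U\subseteq V_i$ of the current graph \emph{deficient} if $|U|\le\alpha N$ and $|N(U)|\le\tfrac{1-2\alpha}{2\alpha}|U|$, where neighborhoods are always taken in the current graph. Starting from $G_0=G$, as long as a deficient set exists I pick one, say $U^j\subseteq V_{i_j}$, and delete its vertices; since each step removes at least one vertex, the process halts after finitely many steps at a graph $G'$ which, by construction, has no deficient set. Write $D_0\subseteq V_0$ and $D_1\subseteq V_1$ for the sets of vertices deleted from each side (at the end, or at an intermediate stage when we need to track the running totals).

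Two observations do the work. First, the only consequence of ``$\alpha$-joined'' that is used: if $A\subseteq V_i$ with $|A|\ge\alpha N$, then $|N_G(A)|>(1-\alpha)N$, since otherwise $V_{1-i}\setminus N_G(A)$ would be a set of size at least $\alpha N$ with no edge to $A$. Second, a sub-additive accounting of lost neighborhoods: at the step that deletes $U^j$ from side $i_j$ we have $|N_{G_j}(U^j)|\le\tfrac{1-2\alpha}{2\alpha}|U^j|$ in the graph $G_j$ just before this step, and $N_G(U^j)\subseteq N_{G_j}(U^j)\cup D_{1-i_j}$ because any neighbor of $U^j$ missing from $G_j$ has already been deleted. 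Summing over the pairwise disjoint sets removed from side $i$, and using $\sum_j|U^j|=|D_i|$, one gets, at every stage of the process,
\[
|N_G(D_i)|\le\tfrac{1-2\alpha}{2\alpha}\,|D_i|+|D_{1-i}|,\qquad i\in\{0,1\}.
\]

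Combining the two observations rules out ever deleting $\alpha N$ or more vertices from a side. Suppose step $k$ is the first at which some side, say side $0$, reaches $|D_0|\ge\alpha N$; just before step $k$ we had $|D_0|<\alpha N$, and $|U^k|\le\alpha N$, so immediately after step $k$ we have $|D_0|<2\alpha N$, while $|D_1|<\alpha N$ still (side $1$ has not yet reached $\alpha N$). But then
\[
(1-\alpha)N<|N_G(D_0)|\le\tfrac{1-2\alpha}{2\alpha}\,|D_0|+|D_1|<\tfrac{1-2\alpha}{2\alpha}\cdot 2\alpha N+\alpha N=(1-\alpha)N,
\]
a contradiction. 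Hence $|D_0|,|D_1|<\alpha N$ throughout, and for the final $G'$: Property~1 holds since $|V_i'|=N-|D_i|>(1-\alpha)N$; Property~2 is exactly the halting condition that $G'$ has no deficient set; and for Property~3, any $U\subseteq V_i'$ with $|U|>\alpha N$ satisfies $|N_{G'}(U)|\ge|N_G(U)|-|D_{1-i}|>(1-\alpha)N-\alpha N=(1-2\alpha)N$, using the $\alpha$-joined consequence for $U$.

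The delicate point — and the reason the cap $|U|\le\alpha N$ is built into the definition of deficient — is the control of the total deletion: a priori, repeatedly shaving off small poorly-expanding sets could consume an entire side, and it is precisely the $\alpha$-joined property, funneled through the sub-additive bound on $|N_G(D_i)|$, that prevents this. (When $\alpha>1/2$ the constant $\tfrac{1-2\alpha}{2\alpha}$ is negative, so no set is deficient and $G'=G$ already works; for $\alpha\le 1/2$ the argument above applies.)
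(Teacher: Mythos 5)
Your proof is correct and follows essentially the same route as the paper's: iteratively delete small sets $U$ with $|N(U)|\le\frac{1-2\alpha}{2\alpha}|U|$, bound $|N_G(D_i)|\le\frac{1-2\alpha}{2\alpha}|D_i|+|D_{1-i}|$ by the same sub-additive accounting, and use the $\alpha$-joined property at the first moment a deleted side would reach $\alpha N$ to get the contradiction $(1-\alpha)N<(1-\alpha)N$. Your write-up is if anything slightly more careful (explicit tracking of the running totals and the trivial case $\alpha>1/2$), but the argument is the same.
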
 
\begin{proof}
Start with $ G':=G $, $ V'_0:=V_0 $ and $ V'_1:=V_1 $. As long as there is a vertex set $ U\subseteq V_i $, $ i\in\{0 ,1\} $, of size $ |U|\leq \alpha N $ with 
$ \big| N_{G'}(U)\big|\leq \frac{1-2\alpha}{2\alpha}|U| $, delete $ U $ from $ V_i $ and update $ G':=G'[V(G')\setminus U] $ and $V'_i=V_i\setminus U$. Let $ V''_0=V_0\setminus V'_0 $ and 
$ {V''_1}=V_1 \setminus V'_1 $ be the union of all deleted subsets in $ V_0 $ and $ V_1 $, respectively. We prove the following claim.
\begin{claim}\label{clm1}
For $ i\in \{0, 1\} $, we have $ |V''_i|\leq \alpha N $.
\end{claim}
In order to prove Claim \ref{clm1}, note that at each iteration of the above process, a set of size at most $ \alpha N $ is added to $ V''_0 $ or $V''_1$. Now, by the contrary, suppose that in an iteration either $| V''_0| $ or $|V''_1|$, say $|V''_0|$, exceeds $\alpha N$. So,  $ \alpha N < |V''_0|\leq 2\alpha N $ and $|V''_1|\leq \alpha N$.  
Also, note that at any iteration, a set $U$ with $ \big| N_{G'}(U)\big|\leq \frac{1-2\alpha}{2\alpha}|U| $ is added to $V''_0$. Thus, 
\begin{align*}
|N_G(V''_0)|\leq \dfrac{1-2\alpha}{2\alpha}|V''_0|+|V''_1|&\leq \dfrac{1-2\alpha}{2\alpha} 2 \alpha N +\alpha N\\
&= (1-\alpha )N.
\end{align*} 
Therefore, $ |V_1\setminus N_{G}(V''_0)|\geq |V_1|-(1-\alpha)N = \alpha N $. Since $ G $ is an $ \alpha $-joined graph, then we should have an edge between $ V''_0 $ and $ V_1\setminus N_G(V''_0) $, which is a contradiction. This proves Claim~\ref{clm1}.

Therefore, we have $ |V_i'|\geq (1-\alpha)N $, $ i\in \{0, 1\} $, and for every subset $ U\subseteq V_i' $ with $ |U|\leq \alpha N $, we have $ |N_{G'}(U)|> \frac{1-2\alpha}{2\alpha}|U| $. Moreover, since $G$ is an $\alpha$-joined graph, so is $G'$. Thus, for every $U\subset V'_i$, with $|U|\geq \alpha N$, we have $|V'_{1-i}\setminus N_{G'}(U)|< \alpha N$. Hence, $|N_{G'}(U)|> |V'_{1-i}|-\alpha N\geq (1-2\alpha)N$. This proves that $G'$ satisfies conditions (1)-(3).
\end{proof}

In the following lemma, we prove that every $ \alpha $-joined bipartite graph, with a suitable choice of $ \alpha $, contains a large even cycle.
\begin{lemma}\label{joined graph}
Let $ n $ be a positive even integer and $\alpha$ be a number with $0<\alpha <0.1 $. Then, every $ \alpha $-joined bipartite graph 
$ G=G(V_0, V_1, E) $ with $ |V_i|=N\geq (n+4)/(2-20\alpha)$, $ i\in \{0, 1\} $, contains a copy of $ C_n $.
\end{lemma}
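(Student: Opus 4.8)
I would prove this by passing to a strongly expanding induced subgraph and then realising $C_n$ as a path of prescribed length closed up by a single edge.

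\textbf{Step 1: reduce to an expander.} First apply Lemma~\ref{expander} to obtain an induced subgraph $G'=(V_0',V_1',E')$ with $|V_i'|\ge (1-\alpha)N$ satisfying the three stated properties: every $U$ with $0<|U|\le \alpha N$ has $|N_{G'}(U)|>\frac{1-2\alpha}{2\alpha}|U|\ge 4|U|$ (since $\alpha<0.1$), and every $U$ with $|U|>\alpha N$ has $|N_{G'}(U)|>(1-2\alpha)N$. Moreover $G'$ is again $\alpha$-joined: the threshold $\alpha N$ is measured against the original $N$ and $V_i'\subseteq V_i$, so any $A\subseteq V_0',\,B\subseteq V_1'$ with $|A|,|B|\ge \alpha N$ span an edge of $G$, hence of $G'$. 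It now suffices to find $C_n$ in $G'$, which has no isolated vertices and hence has an edge.

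\textbf{Step 2: grow two disjoint trees and connect them.} Fix an edge $ab\in E'$ with $a\in V_0'$, $b\in V_1'$. Since $G'$ is bipartite, any cycle through $ab$ has even length, so it is enough to build a path of length exactly $n-1$ from $a$ to $b$ not using the edge $ab$. I would construct, in parallel and hence vertex-disjointly, two trees $\tau_a\ni a$ and $\tau_b\ni b$, each of the form ``a bare path followed by a breadth-first tree''. For $\tau_a$: take a path of length $\ell_a$ off $a$, then do breadth-first search for a further $h_0$ levels; by property (2) these levels grow geometrically (the removal of earlier levels and of the competing tree $\tau_b$ costing only a bounded proportion of each neighbourhood), so after $h_0=O(\log(\alpha N))$ steps the bottom level $S_a$, which lies entirely in the part of $G'$ determined by the parity of $\ell_a+h_0$, has $|S_a|>\alpha N$, and property (3) then keeps it large. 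Build $\tau_b$ symmetrically with parameters $\ell_b,h_0$ and bottom level $S_b$. A short parity check shows that when $n$ is even, $S_a$ and $S_b$ automatically lie in opposite parts. Choose $\ell_a,\ell_b\ge 0$ with $\ell_a+\ell_b=n-2-2h_0$ (possible once $n\ge 2h_0+2$, and of the correct parity for free). As $|S_a|,|S_b|>\alpha N$, the $\alpha$-joinedness of $G'$ gives an edge $a'b'$ with $a'\in S_a$, $b'\in S_b$; then the tree-path from $a$ to $a'$ in $\tau_a$, followed by $a'b'$, followed by the tree-path from $b'$ to $b$ in $\tau_b$, is a path from $a$ to $b$ of length $(\ell_a+h_0)+1+(\ell_b+h_0)=n-1$, and adjoining $ab$ produces a copy of $C_n$.

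\textbf{Main obstacle.} The heart of the proof is the quantitative control of the expansion under vertex deletions. Two points need care. First, $\tau_a$ and $\tau_b$ must be kept disjoint while both are still expanding; this is handled by growing them simultaneously, level by level, so that at each stage the current level of each tree is small relative to its neighbourhood and subtracting the other tree costs only a bounded fraction. Second, and more delicate, one must guarantee that $S_a$ and $S_b$ still have size exceeding $\alpha N$ \emph{after} the $O(n)$ vertices forming the two trees and the bare paths have been removed from $G'$. This is precisely where the hypothesis $N\ge (n+4)/(2-20\alpha)$ enters: it ensures that each part retains enough room for property (3) to deliver a neighbourhood of size greater than $\alpha N$ even after deleting $O(n)$ vertices. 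Finally one has to fix the parities of $\ell_a+h_0$ and $\ell_b+h_0$, pick $a',b'$ outside $\{a,b\}$ so that the resulting closed walk is a genuine cycle of the exact length $n$, and dispose of the degenerate small cases (when $\alpha N<1$ one has $h_0=0$ and the relevant levels are just neighbourhoods, to be handled directly).
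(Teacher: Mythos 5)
Your overall architecture is the same as the paper's: pass to the expander subgraph $G'$ of Lemma~\ref{expander}, build two vertex-disjoint trees whose leaf sets are larger than $\alpha N$ and sit at prescribed distances from two designated vertices, check that for even $n$ the two leaf sets land in opposite parts, and close up a path of length $n-1$ with one edge supplied by $\alpha$-joinedness. The one place where you diverge is the embedding of the trees, and that is where your argument has a genuine gap.

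The paper does \emph{not} embed the trees greedily: it assembles the whole structure (two binary trees joined by a path) into a single abstract tree $T$ of maximum degree $3$ and invokes Theorem~\ref{balogh}, whose proof is a non-local critical-set argument in the Friedman--Pippenger style. Your proposal replaces this with a level-by-level BFS together with greedy extension of the bare paths, claiming that subtracting the previously embedded vertices ``costs only a bounded proportion of each neighbourhood.'' That claim fails exactly where it matters. For $\alpha$ close to $0.1$ the expansion guarantee for small sets is only $|N_{G'}(U)|>\frac{1-2\alpha}{2\alpha}|U|>4|U|$, so a single vertex is guaranteed only $5$ neighbours; when you extend a bare path of length $\Theta(n)$, or open the first BFS levels (of constant size) next to an already-embedded structure of $\Theta(n)$ or $\Theta(\alpha N)$ vertices (and here $\alpha N$ is of order $n$, since $N$ may be as small as roughly $n/2$), all of those few neighbours may already be used, and no interleaving of the two trees and two paths avoids this: at every ordering, some constant-size level must dodge a linear-size forbidden set. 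Even ignoring the paths, a factor-$4$ expansion is not enough to let both BFS trees double in lockstep while each avoids the cumulative vertex set of both (the arithmetic $4\gamma^i-\tfrac{2\gamma}{\gamma-1}\gamma^i\ge\gamma^{i+1}$ has no solution $\gamma>1$). Overcoming this without shrinking $\alpha$ drastically is precisely the content of Theorem~\ref{balogh}; as written, your Step~2 asserts the conclusion of that theorem without a proof that works at the stated constants. To repair the argument you should either invoke Theorem~\ref{balogh} (or Theorem~\ref{forst-bipartite}) as the paper does, or supply a genuinely non-greedy embedding argument.
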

\begin{proof}
Since $ G $ is an $ \alpha $-joined bipartite graph, using Lemma \ref{expander}, G contains an induced subgraph $ G'=G'(V_0', V_1', E') $ with at least 
$ (1-\alpha)N $ vertices in each part such that for every vertex set $ U\subseteq V_i' $, $ i\in \{0, 1\} $, with  $ |U|\leq \alpha N $ we have $ |N_{G'}(U)|\geq \frac{1-2\alpha}{2\alpha}|U| $. Moreover, if $  |U|> \alpha N$, then we have $ |N_{G'}(U)|\geq (1-2\alpha)N $.

Now let $ T_0 $ and $T_1$ be two disjoint copies of a binary tree with $ \lceil \alpha N \rceil $ leaves and height $ \lceil \log \lceil \alpha N\rceil \rceil $ 
and at most $ 2 \lceil \alpha N\rceil$ vertices. For each $i\in \{0,1\}$, let $X_i$ be the set of leaves of $T_i$. Also, let $ T $ be a tree on at most $ 4\lceil\alpha N\rceil +n$ vertices formed by attaching the roots of $ T_0 $ and $ T_1 $ by a path of length $ n-1-2\lceil \log \lceil \alpha N\rceil \rceil $ (see Figure~\ref{fig1}). Note that $ T $ has the maximum degree $ 3 $ and for every pair of vertices $ x_0\in X_0 $ and $ x_1\in X_1 $, there is a path of length $ n-1 $ in $T$ from $ x_0 $ to $ x_1 $. 

We are going to prove that $G'$ contains a copy of $ T $. For this purpose, we apply Theorem~\ref{balogh} with setting $D=3$, $ m_0=m_1= \lfloor\alpha N\rfloor $ and $ M_0=M_1=2\lceil \alpha N\rceil +\frac{n}{2} $. So, by Lemma~\ref{expander} and Theorem~\ref{balogh}, if the following two conditions hold, then $G'$ contains a copy of $T$. 
\begin{enumerate}
\item $ \dfrac{1-2\alpha}{2\alpha}\geq 3 $,
\item $ (1-2\alpha) N \geq 6\alpha N+M_i=6\alpha N+ 2\lceil \alpha N\rceil +\frac{n}{2} $.
\end{enumerate}
Hence, it suffices that $\alpha <1/8$ and $2N(1-10\alpha)\geq n+4$ which hold by assumptions. Therefore, $ G' $ contains a copy of $ T $. 
Note that, for every vertices $x_0\in X_0 $ and $x_1\in X_1$, there is a path $P_{x_0x_1}$ of length $ n-1 $ in $T$ from $ x_0 $ to $ x_1 $. Thus, since $n$ is even, $X_0$ and $X_1$ are in different parts of $G$. Now, since $ G $ is an $ \alpha $-joined bipartite graph and $|X_0|,|X_1|\geq \alpha N$, there is an edge $ e=x_0x_1\in E(G) $ for some $ x_i\in X_i $, $i\in \{0,1\}$.  The path $P_{x_0x_1}$ along with the edge $ e $ forms a cycle $ C_n $ in $G$ (see Figure~\ref{fig1}). This completes the proof. 
\begin{figure}
{\includegraphics[width=\textwidth]
	       {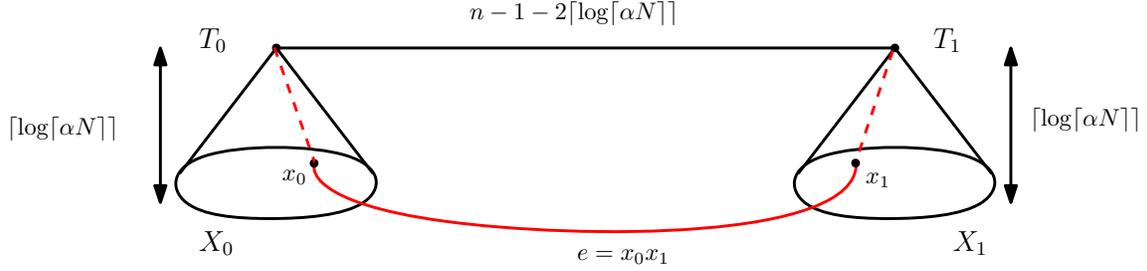}}
\caption{The tree $T$ which leads to the existence of a cycle $C_n$.}\label{fig1}
\end{figure}
\end{proof}

Now, we are ready to prove the main result of this section. 
\begin{theorem}\label{main}
Let $ r\geq 2 $ be an integer. For every sufficiently large even integer $ n $, there exists a bipartite graph $G$ with at most $O(r^{120} \ln^2 r)n$ edges such that $G\to_{1/r} C_n$.
\end{theorem}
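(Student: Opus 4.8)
The plan is to realize $G$ as (a pruning of) a random bipartite graph and then reduce everything to the embedding machinery of Section~2. Concretely, I would take $G=G(V_0,V_1,E)$ from Lemma~\ref{quasi} with $N=c_1 n$, $p=c_2/n$ and granularity $c_3$, where $c_1,c_2,c_3$ are chosen as functions of $r$ so that $e(G)=(1+o(1))c_1^2c_2 n=O(r^{120}\ln^2 r)\,n$. There are three competing pressures on the parameters. First, $c_1$ must be at least a fixed power of $r$: if $c_1\le\Theta(r)$ one can pack a $1/r$-fraction of $E(G)$ into vertex-disjoint blocks each with fewer than $n$ vertices per side, producing a $C_n$-free $J$ with $e(J)\ge e(G)/r$, so $G\not\to_{1/r}C_n$. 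Second, since we will eventually need an $\alpha$-joined subgraph with $\alpha<0.1$ and $\alpha$ will be a constant multiple of $c_3$, the granularity $c_3$ at which Lemma~\ref{quasi} controls densities must be taken of order $1/\ln r$. Third, once $c_3=\Theta(1/\ln r)$ and $c_1$ is a power of $r$, the admissibility hypothesis $\delta>\sqrt{6\ln(c_1e/c_3)/(c_2c_3)}$ of Lemma~\ref{quasi} forces $c_2=\Theta(\ln^2 r)$. Taking $c_1=\Theta(r^{60})$ then gives $e(G)=O(r^{120}\ln^2 r)\,n$. I would also prune from $G$ its atypically high-degree vertices (those of degree exceeding $2pN$): whp these form an exponentially small fraction of the vertices and carry an exponentially small fraction of the edges, so the pruned graph still satisfies Properties~\ref{quasi 1}--\ref{quasi 2} with slightly worse constants and, crucially, now has bounded maximum degree $\Delta=2c_1c_2$. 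This last point is what prevents a dense subgraph of $G$ from being concentrated on too few vertices.

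Now fix such a $G$. Since one colour class of any $r$-edge-colouring has at least $e(G)/r$ edges, it suffices to prove $G\to_{1/r}C_n$, i.e.\ that every $J\subseteq G$ with $e(J)\ge e(G)/r$ contains $C_n$; and by Lemma~\ref{joined graph} together with Lemma~\ref{expander} it is enough to find inside $J$ a \emph{balanced} bipartite subgraph that is $\alpha$-joined for some $\alpha<0.1$ and has parts of size at least $(n+4)/(2-20\alpha)$. I would produce this in two stages. First clean $J$: repeatedly delete vertices of degree below a threshold $t=\Theta(pN/r)$; this deletes at most $2Nt$ edges, so $e(J')\ge\Omega(e(G)/r)$ while $J'$ acquires minimum degree $\ge t$, and since $\Delta(J')\le\Delta=O(c_1c_2)$ the support of $J'$ has size $|V_i(J')|\ge e(J')/\Delta=\Omega(c_1 n/r)$ on each side, comfortably above $(n+4)/(2-20\alpha)$ because $c_1$ is a large power of $r$. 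Second, extract the joined subgraph by a recursive descent on balanced subgraphs: if the current $J_i$ (parts of size $N_i$) is $\alpha$-joined, stop; otherwise it has an empty box $A'\times B'$ with $|A'|=|B'|=\alpha N_i$, hence every edge of $J_i$ lies in one of the three remaining sub-blocks, so one of them carries at least a constant fraction of $e(J_i)$ — pass to it, re-balancing (by discarding low-degree vertices from the longer side) only in the two ``thin'' cases. Property~\ref{quasi 2} bounds how many edges each re-balancing costs, the bounded degree $\Delta$ bounds how often a thin step can occur, and $e(J)\ge e(G)/r$ bounds the total depth; together these force the descent to terminate at an $\alpha$-joined $J_i$ whose parts still have size $\ge(n+4)/(2-20\alpha)$, whence $C_n\subseteq J_i\subseteq J$ by Lemma~\ref{joined graph}.

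The delicate point — and, I expect, the source of the exponent $120$ — is the bookkeeping in the second stage: one must keep the subgraph balanced while simultaneously controlling the number of vertices removed, the number of edges lost, and the number of thin recursion steps, and the thresholds involved interact with $c_1,c_2,c_3,\delta$ with very little slack. Two sanity checks guide the calibration. A cleaned dense subgraph of $G$ cannot be a disjoint union of blocks individually too small to contain $C_n$: since $c_1>r$, such a union would require more than $N$ vertices, so every block one descends into is automatically of size $\omega(n)$. And a subgraph of $G$ with at least $c_3 n$ vertices per side and no empty $c_3 n\times c_3 n$ box is $\alpha$-joined for $\alpha$ a small multiple of $c_3$ (by Property~\ref{quasi 2}), so once the descent reaches a subgraph of this type it halts correctly. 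Making all of this quantitative — above all, verifying that the descent cannot erode the part sizes below $(n+4)/(2-20\alpha)$ before it terminates — is where the main work lies; after that, the implication ``$\alpha$-joined $\Rightarrow C_n$'' is exactly Lemma~\ref{joined graph}, and the reduction from $\to_{1/r}$ to $\to(C_n)_r$ is immediate.
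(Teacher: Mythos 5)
Your setup coincides with the paper's: both take $G$ from Lemma~\ref{quasi} with $N=c_1n$ for $c_1$ a large power of $r$, and both reduce finding $C_n$ in a dense subgraph $H\subseteq G$ to locating inside $H$ a balanced $\alpha$-joined pair of linear size, which is exactly what Lemma~\ref{joined graph} (via Lemma~\ref{expander}) converts into a copy of $C_n$. The gap is precisely the step you yourself defer as ``where the main work lies'': extracting that $\alpha$-joined pair. Your single-branch descent retains only a constant fraction (say $1/3$) of the current edges at each non-joined step, while the part sizes shrink by a factor of at most $(1-\alpha)$. Since the density of every block of $G$ is capped at $(1+\delta)p$ by Property~2 of Lemma~\ref{quasi}, the edge density of the surviving block can strictly \emph{decrease} at every step, so nothing forces the descent to halt: its depth can be as large as $\Theta(\log(N/n))=\Theta(\log r)$, over which the retained edge fraction decays geometrically and wipes out the initial $1/r$ advantage long before joined-ness is reached. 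Neither the high-degree pruning nor the minimum-degree cleaning repairs this (and the paper needs neither), so as written the argument does not close.

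The paper's device (Claim~\ref{clm2}) is to abandon the single branch and account for the entire recursion tree at once, counting the \emph{complement} $F=G\setminus H$: any balanced $N'\times N'$ box containing no $C_n$ of $H$ is not $\tfrac{1}{11}$-joined in $H$, hence contains an empty $\lfloor N'/11\rfloor\times\lfloor N'/11\rfloor$ box of $H$, which by Lemma~\ref{quasi} carries at least $(1-\delta)p\lfloor N'/11\rfloor^2$ edges of $F$; tiling the remainder into boxes of the same scale and applying induction on $N'$ yields $e_F(V_0',V_1')\geq(1-\delta)\bigl(1-(7n/N')^{\lambda}\bigr)pN'^2$ with $\lambda=0.017$. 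Taking $N'=N=r^{d}n$ with $d=59$ (so that $d\lambda>1$) and $\delta<1/r$ (which is what forces $c_2=r^2\ln^2r$ rather than your $\Theta(\ln^2 r)$) gives $e(F)>(1-1/r)e(G)$, contradicting $e(H)\geq e(G)/r$. This global accounting is the missing idea, and it is also the true source of the exponent $120=2d+f$; contrary to your calibration, $c_3=6/11$ and $\alpha=1/11$ remain absolute constants throughout.
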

\begin{proof}
Let $d,f$ be two positive integers. Apply Lemma \ref{quasi} by setting the parameters $ c_1=r^d$, $c_2=r^f \ln^2 r $, $c_3= 6/11$, $\epsilon=1/4$ and 
$ \delta =\sqrt{11\ln(r^{d+3})/(r^f \ln^2 r)}=\sqrt{11(d+3)/(r^f \ln r)}$. Note that $\delta > \sqrt{6{\ln({c_1e}/{c_3})/(c_2c_3)}} $, because $r^3> 11e/6$. Also, suppose that $f,d$ are chosen such that $\delta <1$. Thus, conditions of Lemma~\ref{quasi} hold and there exists a bipartite graph $ G=G(V_0,V_1,E) $, where $|V_0|=|V_1|=N=r^d n$ and $p=r^f \ln^2 r/n$, satisfying Properties (1) and (2) in Lemma~\ref{quasi}. 


 Now, let $H$ be a spanning subgraph of $G$ with $e(H)\geq e(G)/r$. We are going to prove that $H$ contains a copy of $C_n$.

By the contrary, suppose  that there is no copy of $ C_n $ in $ H $. Let $F$ be the graph obtained from $G$ by removing all edges of $H$. By counting $ e(F) $, we will reach to a contradiction. 
First we need the following result.
%
\begin{claim}\label{clm2}
	Let $n$ be sufficiently large and $N'$ be a positive integer. Then for every vertex set $V_i'\subseteq V_i$, $i\in\{0,1\}$, with $|V_0'|=|V_1'|= N'$, we have  
	\begin{equation} \label{claimF}
	e_F(V_0',V_1')\geq (1-\delta) \left(1-\left(\frac{7n}{N'}\right)^\lambda \right) pN'^2,
	\end{equation}
where $\lambda=.017$.
\end{claim}
We prove Claim \ref{clm2} by induction on $ N' $. Let $V_i'\subseteq V_i$, $i\in\{0,1\}$, be such that $|V_0'|=|V_1'|= N'$. First, note that if $N'\leq {7n}$, then the right hand of Inequality~\ref{claimF} is non-positive and so the claim obviously holds.  

Now, suppose that $ N'> 7n $. Also, let $N'=11q+w	$, where $0\leq w\leq 10$. Let $H'$ be the induced subgraph of $H$ on $V_0'\cup V_1'$. Since there is no copy of $ C_n $ in $ H' $, using Lemma~\ref{joined graph} with $\alpha=1/11$ and $n\geq 15$, $ H'$ is not an $ \alpha $-joined graph, i.e. there are two subsets $ U\subseteq V'_0 $ and $ W\subseteq V'_1 $ both with size $ \lfloor \alpha N'\rfloor=q $ such that $e_{H'}(U,W)=0$. Thus, using Lemma \ref{quasi}, we have
\[
e_F(U, W)=e_G(U,W)\geq  (1-\delta)p q^2.
\]
  In order to apply the induction hypothesis, let $X=V_0'\setminus U$ and $Y=V_1'\setminus W$ and let $ X_1,\ldots, X_{10} $ and $ Y_1,\ldots, Y_{10}$ be disjoint subsets of $ X $ and $Y$, respectively, such that for each $ i\in \{1,\ldots,10 \} $ we have 
$ |X_i|=|Y_i|= q$.\\
 Now, by the induction hypothesis, we have 
\begin{align*}
e_F(V_0',V_1') &\geq  e_F(U, W)+\sum_{i=1}^{10} (e_F(U, Y_i)+e_F(W, X_i))+e_F(X, Y)\\
&\geq (1-\delta)p \bigg[q^2 + 20 \left(1-\left(\frac{7n}{q}\right)^\lambda\right) q^2+\left(1-\left(\frac{7n}{(N'-q)}\right)^\lambda\right) (N'-q)^2\bigg]\\
&= (1-\delta)p \bigg[21q^2 +(N'-q)^2 - 20 ({7n})^\lambda q^{2-\lambda}-(7n)^\lambda (N'-q)^{2-\lambda}\bigg]\\
&= (1-\delta)p \bigg[N'^2-2qw - 20 (7n)^\lambda q^{2-\lambda}-(7n)^\lambda (N'-q)^{2-\lambda}\bigg]\\
&\geq (1-\delta)p \bigg[N'^2-20(7n)^\lambda q - 20 (7n)^\lambda q^{2-\lambda}-(7n)^\lambda (N'-q)^{2-\lambda}\bigg]\\
&= (1-\delta)p \bigg[N'^2-(7n)^\lambda\big( 20q + 20 q^{2-\lambda}+ (N'-q)^{2-\lambda}\big)\bigg]\\
&\geq (1-\delta)p \bigg[N'^2-(7n)^\lambda\big( 20N'/11 + 20 (N'/11)^{2-\lambda}+(N'-N'/11)^{2-\lambda}\big)\bigg],
\end{align*}
where the last inequality yields from the fact that $q\leq N'/11$ and the function $20q + 20 q^{2-\lambda}+ (N'-q)^{2-\lambda}$ is increasing with respect to $q$ whenever $0<\lambda<1$. Therefore, 
\begin{align*}
e_F(V_0',V_1') 
&\geq (1-\delta)p \bigg[N'^2-(7n)^\lambda N'^{(2-\lambda)}\big( 20N'^{(\lambda-1)}/11 + 20 (\frac{1}{11})^{2-\lambda}+(\frac{10}{11})^{2-\lambda}\big)\bigg], \\
&\geq (1-\delta)p \bigg[N'^2-(7n)^\lambda N'^{(2-\lambda)}\big( 20(7n)^{(\lambda-1)}/11 + 20 (\frac{1}{11})^{2-\lambda}+(\frac{10}{11})^{2-\lambda}\big)\bigg],
\end{align*}
where the last inequality holds since $N'\geq 7n$ and $0<\lambda<1$.
Now, in order to complete the proof of the claim, it suffices to have 
\begin{align}\label{aa}
20(7n)^{(\lambda-1)}/11+ 20 (1/11)^{2-\lambda}+(10/11)^{2-\lambda}\leq 1.
\end{align}
which is the case for $\lambda=.017$ and sufficiently large $n$. This completes the proof of Claim~\ref{clm2}.

\paragraph{}

Hence, by Claim~\ref{clm2}, we have 

\begin{equation}
\label{eq:lower}
e(F) \geq (1-\delta)p(1-\frac{7^\lambda}{r^{d\lambda}})N^2.
\end{equation}
On the other hand, $ e(H)\geq e(G)/r $, so by Lemma~\ref{quasi}, we have 
\begin{align}\label{bb}
e(F)=e(G)-e(H)\leq (1-\frac{1}{r})e(G)\leq (1-\frac{1}{r})(1+n^{-1/4}) pN^2.
\end{align}
Combining \eqref{eq:lower} and \eqref{bb} yields,
\begin{equation} \label{eq:lu}
(1-\delta)p(1-\frac{7^\lambda}{r^{d\lambda}})N^2 \leq e(F) \leq (1-\frac{1}{r})(1+n^{-1/4})pN^2.
\end{equation}
Now, with $ \delta =\sqrt{11(d+3)/(r^f \ln r)}$, we have
\begin{align}\label{ineq.A}
\left(1-\sqrt{\frac{11(d+3)}{r^f \ln r}}\right)\left(1-\frac{7^\lambda}{r^{d\lambda}}\right)  \leq \left(1-\frac{1}{r}\right)\left(1+\frac{1}{n^{1/4}}\right).
\end{align}
First, suppose that $r$ is sufficiently large and set $ d=59 $ and $ f=2 $, then we have 

\begin{align}
1-\frac{\sqrt{682}}{r\sqrt{\ln r}}- \frac{7^{0.017}}{r^{1.003}}  \leq \left(1-\frac{1}{r}\right)\left(1+\frac{1}{n^{1/4}}\right),
\end{align}
which leads to a contradiction for sufficiently large values of  $r$ and $n$.
This contradiction implies that there should be a copy of $ C_n $ in $ H $. Therefore we have $ G\to_{1/r} C_n $ and moreover, by Lemma~\ref{quasi},
\[
|E(G)|\leq (1+n^{-1/4}) r^{2d+f}(\ln^2 r) n= O(r^{120} \ln^2 r)n.
\]
Note that if $r$ is not large, then one can set $d=62$ and $f=28$ and similar arguments imply the assertion.
\end{proof}

\begin{corollary}
For every integer $r\geq 2$ and  positive even integer $n$, we have $\hat{R}(C_n,r)=  O(r^{120} \ln^2 r)n$.
\end{corollary}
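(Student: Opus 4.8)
The plan is to read the corollary directly off Theorem~\ref{main}, the only additional ingredient being the elementary fact, already recorded in Section~1, that for every integer $r\ge 2$ one has $G\longrightarrow_{1/r}C_n\Rightarrow G\longrightarrow(C_n)_r$. (Indeed, in any $r$-edge-colouring of $G$ some colour class contains at least $e(G)/r$ edges, and the hypothesis $G\longrightarrow_{1/r}C_n$ then forces a monochromatic copy of $C_n$ inside that class.) Consequently, for every sufficiently large even $n$, the bipartite graph $G$ produced by Theorem~\ref{main} satisfies $G\longrightarrow(C_n)_r$ and has $O(r^{120}\ln^2 r)\,n$ edges, whence $\hat R(C_n,r)\le e(G)=O(r^{120}\ln^2 r)\,n$; the implied constant is absolute, being the one hidden in Theorem~\ref{main}.

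It remains to cover the even values of $n$ below the threshold $n_0(r)$ furnished by Theorem~\ref{main}, the only subtle point being that the implied constant stays independent of $r$. I would split on whether $r$ lies below or above the absolute constant $R_0$ beyond which the choice $d=59$, $f=2$ in the proof of Theorem~\ref{main} is admissible. If $r<R_0$, then there are only finitely many pairs $(n,r)$ with $n<n_0(r)$, and for each of them $\hat R(C_n,r)$ is finite, so enlarging the implied constant absorbs them all. If $r\ge R_0$, one checks from the proof of Theorem~\ref{main} that $n_0(r)$ is polynomially bounded in $r$ (in fact $O(r^4)$); for even $n<n_0(r)$ we may then use the trivial bound $\hat R(C_n,r)\le\binom{R(C_n,r)}{2}$ together with the classical Bondy--Simonovits-type estimate $R(C_n,r)=O(r^2+rn)$ for even $n$, which yields $\hat R(C_n,r)=O\!\big(r^2(r+n)^2\big)=O(r^{10})$, comfortably within $O(r^{120}\ln^2 r)\,n$.

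There is no genuine obstacle: all the substance is contained in Theorem~\ref{main}, and the corollary is essentially bookkeeping. The single point deserving a moment's care is the uniformity in $r$ of the implied constant for the bounded range of small $n$, which the finiteness argument (for small $r$) and the elementary Ramsey-number bound for even cycles (for large $r$) dispose of.
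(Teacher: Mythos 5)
Your proposal is correct and follows essentially the same route as the paper: Theorem~\ref{main} together with the observation $G\longrightarrow_{1/r}C_n\Rightarrow G\longrightarrow(C_n)_r$ handles large even $n$, and the trivial bound $\hat R(C_n,r)\le\binom{R(C_n,r)}{2}$ combined with the known linear-in-$n$ multicolor Ramsey estimate for even cycles handles the remaining small $n$. You are in fact somewhat more careful than the paper about the dependence of the threshold $n_0$ on $r$, which the paper's proof silently treats as an absolute constant.
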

\begin{proof}
If $n$ is sufficiently large, then the result immediately follows from Theorem~\ref{main}. Now, suppose that $n$ is bounded by a constant number $n_0$. Then, by a result in \cite{Lucss}, we have $R(C_n,r)=O(rn)$ and thus, $\hat{R}(C_n,r)\leq \binom{R(C_n,r)}{2}\leq O(r^2n^2)= O(r^2)$.
\end{proof}
\section{Multicolor size-Ramsey number of odd cycles}
In this section, we deals with the multicolor size-Ramsey number of odd cycles. 
Javadi et al. \cite{JKhOP} proved that if $ n $ is odd, then $ \hat{R}(C_n, r)\leq35^{2^r}n $, while it is far from the best known lower bound 
$ \Omega (r^2)n=\hat{R}(P_n, r)\leq \hat{R}(C_n, r) $. In this section, we improve both lower and upper bounds to exponential ones in $r$. 
Although, in the previous section we proved that for even $ n $, dependency of $\hat{R}(C_n, r)$ on $ r $ is polynomial, surprisingly here we prove that  $2^{O(r)}n\leq \hat{R}(C_n, r)\leq 2^{O(r^2)}n$.   
First, we prove the lower bound. For this purpose, we need the following two results. 
It is a well-known result of Erd\H{o}s \cite{Erd. bipartite} that every graph contains a large bipartite subgraph.
\begin{lemma}\label{bipartite} {\rm \cite{Erd. bipartite}}
Every simple graph $ G $ has a spanning bipartite subgraph with at least  $ e(G)/2 $ edges.
\end{lemma}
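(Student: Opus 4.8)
The final statement I am asked to prove is Lemma~\ref{bipartite}: every simple graph $G$ has a spanning bipartite subgraph with at least $e(G)/2$ edges. This is a classical fact (due to Erd\H{o}s), and the plan is to give the standard one‑line extremal/local‑switching argument.

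**The approach.** The cleanest route is the \emph{max‑cut} argument. Among all partitions $V(G)=A\cup B$ with $A\cap B=\varnothing$, choose one that maximizes the number of crossing edges $e_G(A,B)$; such a maximizer exists since there are only finitely many partitions. Let $H$ be the spanning subgraph of $G$ with vertex set $V(G)$ and edge set $E_G(A,B)$. By construction $H$ is bipartite with bipartition $(A,B)$, so it only remains to show $e(H)=e_G(A,B)\geq e(G)/2$.

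**Key step.** Fix any vertex $v$, say $v\in A$. Let $d_{\mathrm{in}}(v)=|N_G(v)\cap A|$ be the number of neighbours of $v$ on its own side and $d_{\mathrm{out}}(v)=|N_G(v)\cap B|$ the number on the other side, so $d_G(v)=d_{\mathrm{in}}(v)+d_{\mathrm{out}}(v)$. If we moved $v$ from $A$ to $B$, the crossing‑edge count would change by $d_{\mathrm{in}}(v)-d_{\mathrm{out}}(v)$; by maximality of the cut this quantity is $\leq 0$, i.e. $d_{\mathrm{out}}(v)\geq d_{\mathrm{in}}(v)$, hence $d_{\mathrm{out}}(v)\geq d_G(v)/2$. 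The same holds for $v\in B$. Summing over all vertices,
\[
2\,e_G(A,B)=\sum_{v\in V(G)} d_{\mathrm{out}}(v)\;\geq\;\frac12\sum_{v\in V(G)} d_G(v)\;=\;e(G),
\]
so $e(H)=e_G(A,B)\geq e(G)/2$, as required. (Alternatively, one can argue probabilistically: place each vertex in $A$ or $B$ independently with probability $1/2$; each edge is crossing with probability $1/2$, so the expected number of crossing edges is $e(G)/2$, and some outcome attains at least the expectation — then take that partition. Either argument is complete and short.)

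**Main obstacle.** There is essentially no obstacle here; the only mild point to be careful about is that the switching inequality must be applied at a \emph{global} maximum (not merely a local one produced by single‑vertex moves), but since we selected a partition maximizing $e_G(A,B)$ over all partitions, the single‑vertex move inequality is immediate, and that suffices. I would therefore present the max‑cut version as the main proof and perhaps remark on the probabilistic alternative.
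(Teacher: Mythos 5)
Your proof is correct: the max-cut switching argument (together with the double counting $\sum_v d_{\mathrm{out}}(v)=2e_G(A,B)$) is the standard proof of this classical fact, and the probabilistic alternative you mention is equally valid. The paper itself gives no proof — it simply cites Erd\H{o}s — so there is nothing to compare against; your argument is complete and exactly what is expected.
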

\noindent The following theorem, due to Bal and DeBiasio \cite{Bal}  states a lower bound for $ \hat{R}(P_n) $.
\begin{theorem}\label{SRP}{\rm \cite{Bal}}
For every integer $ n\geq 2 $,  we have $ \hat{R}(P_n)\geq (3.75-o(1))n $. 
\end{theorem}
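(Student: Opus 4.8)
The plan is to follow the strategy of Bal and DeBiasio \cite{Bal}. Let $G$ be an arbitrary graph with $G\longrightarrow (P_n)_2$; we must show $e(G)\ge (3.75-o(1))n$. First I would reduce to the case that $G$ is connected, since a $2$-colouring with no monochromatic $P_n$ can be chosen on each component independently; thus $e(G)\ge |V(G)|-1$. The basic input is the \emph{cover colouring}: given a vertex set $S$, colour every edge meeting $S$ red and all remaining edges blue. A red path alternates between $S$ and its complement, so it has at most $2|S|+1$ vertices; hence if $|S|\le (n-2)/2$ the red graph has no $P_n$, and since $G\longrightarrow (P_n)_2$ the blue graph $G-S$ must contain a $P_n$. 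Applying this to a set $S$ of size $\lfloor (n-2)/2\rfloor$ gives $|V(G)|\ge n+\lfloor (n-2)/2\rfloor=\tfrac32 n-O(1)$, hence $e(G)\ge\tfrac32 n-O(1)$; this is the ``easy'' part of the bound.

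The real work is improving the constant from $\tfrac32$ to $\tfrac{15}{4}=3.75$. For this I would pass to a minimal counterexample: a graph $G$ with $G\longrightarrow (P_n)_2$ and $e(G)$ as small as possible. Such a $G$ is \emph{edge-critical}, meaning that for every edge $xy$ there is a $2$-colouring $c$ of $G-xy$ with no monochromatic $P_n$, while colouring $xy$ with either colour completes a monochromatic $P_n$ through $xy$. Unwinding this, for each colour the vertices $x$ and $y$ must each be the end of a monochromatic path in $G-xy$ so that the two paths are vertex-disjoint and together span at least $n$ vertices. Feeding this local ``robust long path'' structure at every edge into a global double-counting argument — weighting vertices by degree and using the length constraints on the forced monochromatic paths to limit how often low-degree vertices can occur — I would aim to show that the combined constraints force $2e(G)=\sum_{v}d_G(v)\ge(\tfrac{15}{2}-o(1))n$. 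A possibly cleaner variant avoids minimality: fix a maximal linear forest $F\subseteq G$ all of whose components have fewer than $n$ vertices, colour $F$ red and $E(G)\setminus F$ blue; the red graph has no $P_n$, so the blue graph contains a $P_n$, and analysing its edges against the maximality of $F$ should give the same conclusion.

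The main obstacle will be this last, global step. The cover colouring on its own loses more than a factor of two, so the remaining factor of $\tfrac52$ has to be extracted from the edge-critical structure (or the maximal-short-linear-forest colouring), and the delicate point is that vertices of degree $1$ or $2$ cannot simply be deleted or suppressed: suppressing a linear number of degree-$2$ vertices would decrease $n$ and $e(G)$ in lockstep and erode the bound by a linear amount. The argument must therefore track the number of vertices of each small degree against the constraints imposed on the monochromatic paths, and it is exactly this balancing that yields the somewhat unusual constant $3.75$.
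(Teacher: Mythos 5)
This statement is not proved in the paper at all: it is quoted verbatim from Bal and DeBiasio \cite{Bal} and used as a black box in the proof of Theorem~\ref{Odd}. So there is no ``paper's own proof'' to compare against; what matters is whether your sketch would stand on its own as a proof of the cited result.

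It would not, because the step that produces the constant $15/4$ is missing. Your cover-colouring argument is correct as far as it goes, but it only yields $|V(G)|\geq \tfrac32 n-O(1)$ and hence $e(G)\geq \tfrac32 n-O(1)$ for a connected Ramsey graph. Everything beyond that is stated as an intention (``I would aim to show that the combined constraints force $\sum_v d_G(v)\geq(\tfrac{15}{2}-o(1))n$''), and you yourself flag this as the main obstacle. The edge-critical structure you extract --- for every edge $xy$ and every good colouring of $G-xy$, two disjoint monochromatic paths ending at $x$ and $y$ covering at least $n$ vertices in each colour --- is a reasonable starting point, but no double-counting scheme is specified, and it is not at all clear that this local information alone forces average degree $7.5$; note that the previous best lower bound obtained from arguments of this general flavour was only $2.5n$, and the jump to $3.75n$ in \cite{Bal} required constructing several new explicit colourings (built around a partition of the vertices by degree) rather than a minimal-counterexample count. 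The alternative ``maximal short linear forest'' colouring you mention is likewise only asserted to ``give the same conclusion'' without any analysis. As written, the proposal establishes a lower bound of roughly $\tfrac32 n$, not $(3.75-o(1))n$.
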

\noindent Using the above results, we may prove an exponential lower bound in $r$ for $ \hat{R}(C_n, r) $, when $ n $ is odd.
\begin{theorem}\label{Odd}
Let $ n,r\geq 2 $ be two integers. If $G$ is a non-bipartite graph containing the path $P_n$, then 
\[
 \hat{R}(G, r)> (3.75-o(1)) 2^{r-2}n.
\]
In particular, the lower bound holds for $G=C_n$, when $n$ is odd.
\end{theorem}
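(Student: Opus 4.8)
The plan is to iterate the bipartite-halving lemma (Lemma~\ref{bipartite}) $r-1$ times to reduce the $r$-color problem on a non-bipartite host graph $G$ down to a $1$-color (i.e., ordinary containment) problem for $P_n$ on a bipartite graph, and then to invoke the lower bound of Bal and DeBiasio (Theorem~\ref{SRP}). Concretely, suppose $G \longrightarrow (G^\star)_r$ where $G^\star$ is a non-bipartite graph containing $P_n$; I want to show $e(G)$ is large. The key observation is that if $G$ is \emph{bipartite}, then $G$ can contain no non-bipartite $G^\star$, so in any $r$-coloring, colour class $r$ (say) being removed still must leave a monochromatic $G^\star$ in one of the first $r-1$ colours — but we don't get to choose the coloring, so we instead argue in the other direction: we exhibit a coloring.

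First I would reformulate the desired contrapositive: given $G$ with $e(G) \le (3.75-o(1))2^{r-2} n$, build an $r$-coloring of $E(G)$ with no monochromatic copy of $G^\star$. Step one: by Lemma~\ref{bipartite}, $G$ has a spanning bipartite subgraph $G_1$ with $e(G_1) \ge e(G)/2$; colour all edges of $E(G)\setminus E(G_1)$ with colour $1$. Since $G^\star$ is non-bipartite and the colour-$1$ subgraph is... not obviously bipartite, so this naive split fails. The correct iteration is the reverse: repeatedly pull out a \emph{large bipartite subgraph} and colour the small remainder. Let $G_0 = G$, and for $i=1,\dots,r-1$ let $G_i$ be a spanning bipartite subgraph of $G_{i-1}$ with $e(G_i)\ge e(G_{i-1})/2 \ge e(G)/2^i$; assign colour $i$ to the edges of $E(G_{i-1})\setminus E(G_i)$, and colour $r$ to all edges of $G_{r-1}$. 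Each colour class $i<r$ is a subgraph of $G_{i-1}$ that was removed to bipartite-ify it; this need not be bipartite either, so the monochromatic-$G^\star$-free property of colours $1,\dots,r-1$ is \emph{not} automatic.

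Hence the real argument must run through the \emph{upper} side: since $G \longrightarrow (G^\star)_r$ and $G^\star \supseteq P_n$, in particular $G \longrightarrow (P_n)_r$, but that only gives $\hat R(P_n,r)$-type bounds, which are polynomial — too weak. So the non-bipartiteness of $G^\star$ must be used essentially. I would use it as follows: colour $G$ by taking $G_{r-1}$ bipartite with $e(G_{r-1}) \ge e(G)/2^{r-1}$ as above, colour $G_{r-1}$ entirely with colour $r$ — it contains no copy of the non-bipartite $G^\star$ — and colours $1,\dots,r-1$ receive the edges removed at stages $1,\dots,r-1$; since $G \longrightarrow(G^\star)_r$, one of colours $1,\dots,r-1$ contains a monochromatic $G^\star$, hence a copy of $P_n$, hence that colour class has $\ge \hat R(P_n) \ge (3.75-o(1))n$ edges — but a single colour class $i$ here has at most $e(G_{i-1})-e(G_i) \le e(G_{i-1}) \le e(G)/2^{i-1}$ edges. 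To force the worst case I instead partition more cleverly so that \emph{every} monochromatic class among colours $1,\dots,r-1$ is small: a standard averaging gives that some colour among the first $r-1$ has at most $(e(G)-e(G_{r-1}))/(r-1)$ edges, which does not immediately chain either. The cleanest route, and the one I would write up, is the halving chain together with the fact that the colour-$i$ class, viewed inside $G_{i-1}$, is exactly $G_{i-1}$ minus a bipartition, so it contains all "bad" (odd-girth-forcing) structure; one then shows by induction on $r$ that a host graph that is Ramsey for a non-bipartite $P_n$-containing $G^\star$ in $r$ colours must contain, after removing one bipartite layer, a host graph Ramsey for $G^\star$ in $r-1$ colours, giving $e(G) \ge 2\,\hat R(G^\star,r-1) \ge \cdots \ge 2^{r-1}\hat R(G^\star,1) = 2^{r-1}\hat R_{\text{emb}}(G^\star) \ge 2^{r-1}\cdot$ (something), and finally $\hat R(G^\star,1)\ge e(P_n)$ — but to land on $3.75\cdot 2^{r-2}n$ rather than $2^{r-1}n$, the base case of the induction should be $r=2$: a $2$-Ramsey host for $G^\star$ loses only one bipartite layer before it must already contain $P_n$ outright, so $e(G)\ge 2\hat R(C_n\text{-like}) $...

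The main obstacle, and where I'd spend the care, is getting the induction base and the "$\hat R(P_n)$ appears at the end" to combine into the constant $3.75\cdot 2^{r-2}$ rather than $2^{r-1}$ or $2^{r}$: the point is that at the last step one should \emph{not} pay another factor of $2$, because once we are down to $2$ colours on a graph that is Ramsey for the non-bipartite $G^\star$, taking the large bipartite subgraph $G'$ (colour $2$) kills $G^\star$ there, so the colour-$1$ class must contain $G^\star \supseteq P_n$, whence $e(G)\ge e(\text{colour }1)+e(G') \ge \hat R(P_n) + e(G)/2$, i.e. $e(G) \ge 2\hat R(P_n) \ge (7.5-o(1))n$; combined with $r-2$ further halvings going up, $e(G)\ge 2^{r-2}\cdot 2\hat R(P_n)/2 = \cdots$. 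I will lay out the chain $e(G)\ge 2 e(\text{Ramsey host for } r-1 \text{ colours})$ for $r\ge 3$ and the $2\hat R(P_n)$ bound at $r=2$, multiply, and simplify to $(3.75-o(1))2^{r-2}n$; the $o(1)$ is inherited verbatim from Theorem~\ref{SRP}. The specialization to $G=C_n$ with $n$ odd is immediate since $C_n$ is non-bipartite and contains $P_n$.
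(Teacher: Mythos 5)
Your final strategy coincides with the paper's: peel off a spanning bipartite subgraph $F$ with $e(F)\ge e(G)/2$ (Lemma~\ref{bipartite}), observe that $F$ cannot host the non-bipartite $G^\star$ so one entire colour class can be spent on it, deduce $\hat R(G^\star,r)\ge 2\,\hat R(G^\star,r-1)$ for $r\ge 3$, and bottom out at $r=2$ via Theorem~\ref{SRP}. The paper runs this as a contrapositive induction (explicitly colouring any graph with few edges so that no colour contains $G^\star$), while you run it as a lower bound on Ramsey hosts; these are equivalent, and your inductive step --- $G\to(G^\star)_r$ implies $G\setminus F\to(G^\star)_{r-1}$, hence $e(G)\ge 2\,\hat R(G^\star,r-1)$ --- is correct as stated.

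The base case, however, contains a genuine error. You argue that in the $2$-colouring where the large bipartite subgraph $G'$ receives colour $2$, the colour-$1$ class must contain a copy of $G^\star\supseteq P_n$ and therefore has at least $\hat R(P_n)$ edges. Containing a single copy of $P_n$ forces only $n-1$ edges, not $\hat R(P_n)$ edges: $\hat R(P_n)$ bounds graphs that are \emph{Ramsey} for $P_n$, not graphs that merely contain it. So your claimed base bound $e(G)\ge \hat R(P_n)+e(G)/2$, i.e.\ $e(G)\ge 2\hat R(P_n)$, is unjustified, and this is also the source of the factor-of-two confusion at the end of your write-up ($2^{r-1}$ versus $2^{r-2}$). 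The correct base case is a one-line monotonicity observation: since $P_n\subseteq G^\star$, any graph that is Ramsey for $(G^\star)_2$ is Ramsey for $(P_n)_2$, whence $\hat R(G^\star,2)\ge \hat R(P_n)\ge(3.75-o(1))n$ by Theorem~\ref{SRP}. Chaining this with the $r-2$ doublings from your inductive step yields exactly $(3.75-o(1))2^{r-2}n$, with no ad hoc division by $2$ required.
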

\begin{proof}
Let $H$ be a graph with  at most $ (3.75-o(1)) 2^{r-2}n$ number of edges.
We use induction on $ r $  to show that there is an $ r $-edge coloring of $ E(H) $ with colors $ 1, \dots, r $ such that $ H $ contains no monochromatic copy of $ G $.
First suppose that $ r=2 $. Since $G$ contains $P_n$, using Theorem \ref{SRP}, we have
\[
\hat{R}(G)\geq  \hat{R}(P_n)\geq (3.75-o(1))n.
\]
Therefore, $H$  admits a $ 2 $-edge coloring without a monochromatic copy of $G$.\\
Now, let $ r\geq 3 $ and suppose that $ H $ is a graph with $ e(H)\leq  (3.75-o(1)) 2^{r-2}n $.
Using Lemma~\ref{bipartite}, $ H $ has a spanning bipartite subgraph $ F $ with $ e(F)\geq e(H)/2 $.
Color all edges of $F$ by color $r$. Also, note that $e(H\setminus F)\leq e(H)/2\leq (3.75-o(1)) 2^{r-3}n $. Thus, by the induction hypothesis, there is an $ (r-1) $-edge coloring of $ H\setminus F $ with colors $ 1, \dots, r-1 $ such that $ H\setminus F $ contains no monochromatic copy of $ G $. Furthermore,
the graph $ F $ does not contains $G$, since $G$ is non-bipartite. Hence, $H$ admits an $r$-edge coloring without a monochromatic copy of $G$ and this completes the proof.
\end{proof}
Now, we delve into the upper bound. In Theorem~\ref{main}, we proved that for even integer $n$ and every integer $r\geq 2$, there is a graph $G$ with $e(G)\leq O(r^c) n $ for some constant $c$, where  $ G\longrightarrow_{1/r} C_n $. One may ask if it is also true for odd $n$. Nevertheless, it can be seen that when $n$ is odd there is no graph $G$ with $G\longrightarrow_{1/r} C_n$. To see this, note that for every graph $G$, by Lemma~\ref{bipartite}, $G$ contains a bipartite spanning subgraph $F$ with $e(F)\geq e(G)/2\geq e(G)/r$, while $F$ contains no odd cycle. Thus, $G\not\longrightarrow _{1/r} C_n$.
In the light of this observation, one may see that the approach of the proof of Theorem~\ref{main} does not work to prove an upper bound for $\hat{R}(C_n,r)$ when $ n $ is odd and we have to deploy a new tool.

The following lemma, is the counterpart of Lemma~\ref{joined graph} for odd cycles. Here, we show that if $ G $ is a graph without an odd cycle $ C_n $, then either it contains two large disjoint sets without any edge between them, or it contains two large disjoint stable sets.
\begin{lemma}\label{Beta-odd}
Let $ n $ be a positive odd integer and $ G $ be a graph which contains no copy of $C_n$. Also, let $\alpha$ be a number, where $ 0<\alpha \leq 0.033 $. Then, for every two disjoint subsets $ V_0 $ and $ V_1 $ of vertices with $ |V_0|=|V_1|=N\geq ({n+28})/(1-30\alpha) $, either 
\begin{itemize}
	\item  [{\rm (i)}]  there are two subsets $ V_0'\subseteq V_0 $ and $ V_1'\subseteq V_1$ with $ |V_0'|=|V_1'|\geq\alpha N $ and $ e_G(V_0', V_1')=0 $, or
	\item [{\rm (ii)}]  there are two subsets $ V_0'\subseteq V_0 $ and $ V_1'\subseteq V_1$ with $ |V_0'|=|V_1'|\geq(1-\alpha) N$ and
	$  e_G(V_0')=e_G(V_1')=0 $.\\
\end{itemize}
\end{lemma}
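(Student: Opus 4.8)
We are given a graph $G$ with no copy of $C_n$ ($n$ odd) and two disjoint sets $V_0,V_1$ of size $N$, with $\alpha\leq 0.033$ and $N\geq (n+28)/(1-30\alpha)$. We want to show that either there is a ``missing rectangle'' of side $\geq\alpha N$ (alternative (i)) or there are two stable sets of side $\geq (1-\alpha)N$ inside $V_0,V_1$ (alternative (ii)). The idea is to assume (i) fails and derive (ii) by an argument parallel to the even case, but now using a tree that forces an \emph{odd} cycle rather than an even one.

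\textbf{Step 1: clean up $G[V_0,V_1]$ using failure of (i).} Assume alternative (i) does not hold, i.e.\ every pair $A\subseteq V_0$, $B\subseteq V_1$ with $|A|=|B|\geq \alpha N$ has an edge between them; in other words the bipartite graph $G[V_0,V_1]$ is ``$\alpha$-joined'' in the sense of the previous section (after symmetrizing the size condition). First I would apply Lemma~\ref{expander} to $G[V_0,V_1]$ to extract an induced subgraph $G'=(V_0',V_1',E')$ with $|V_i'|\geq (1-\alpha)N$ and the expansion bounds: $|N_{G'}(U)|>\frac{1-2\alpha}{2\alpha}|U|$ for small $U$ and $|N_{G'}(U)|>(1-2\alpha)N$ for large $U$. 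Since $\alpha\leq 0.033$ we have $\frac{1-2\alpha}{2\alpha}>14$, which is comfortably larger than the maximum degree of the tree we plan to embed.

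\textbf{Step 2: embed a tree forcing an odd cycle, or find stable sets.} Here is the key difference from the even case. In the even case one takes two binary trees with many leaves, joins their roots by a path of the right parity, embeds the whole tree by Theorem~\ref{balogh}, and then uses one more $\alpha$-joined edge between the two leaf-sets to close an $n$-cycle; the parity works because both leaf sets land in opposite parts. For odd $n$ that edge-between-leaf-sets must join two vertices in the \emph{same} part, so it is not an edge of the bipartite graph $G[V_0,V_1]$ — it must be an edge of $G$ inside $V_0$ (or inside $V_1$). So the plan is: build a tree $T$ consisting of two binary trees $T_0,T_1$ with $\lceil\alpha N\rceil$ leaves each, with their roots joined by a path whose length has parity chosen so that \emph{the leaf-sets $X_0,X_1$ both end up in $V_0'$} (or both in $V_1'$), and such that any $x_0\in X_0$, $x_1\in X_1$ are joined in $T$ by a path of length $n-1$. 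Embed $T$ into $G'$ by Theorem~\ref{balogh} (the degree is $\leq 3$, the size bookkeeping $M_0=M_1$ is $O(\alpha N)+n/2$, and the inequalities reduce, as in Lemma~\ref{joined graph}, to $\alpha<1/8$ and roughly $2(1-c\alpha)N\geq n+\text{const}$, which holds by the hypothesis on $N$ with the constant $28$ absorbing the extra tree vertices). Now look at $X_0,X_1\subseteq V_0'$, both of size $\geq\alpha N$. If $G[V_0']$ has an edge between \emph{some} pair of $\alpha N$-subsets of $X_0$ and $X_1$ — in particular if $e_G(X_0,X_1)\neq 0$ — we get an edge $x_0x_1$ closing a path of length $n-1$ into a $C_n$, contradicting the hypothesis. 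Hence $e_G(X_0,X_1)=0$; but then, running the same construction with the roles of the two parts of the binary tree arranged suitably, one forces $X_0$ and $X_1$ to be the relevant pieces and concludes that $G$ has no edges between two large subsets \emph{inside the same side} — iterating/peeling exactly as in the proof of Lemma~\ref{expander}, one extracts $V_0'\subseteq V_0$ with $e_G(V_0')=0$ of size $\geq(1-\alpha)N$ on each of the two sides, which is alternative (ii).

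\textbf{Step 3: the peeling argument for the stable sets.} To turn ``no edges between two $\alpha N$-subsets of a prescribed pair of leaf-sets'' into ``a stable set of size $(1-\alpha)N$'', I would mimic Claim~\ref{clm1}: repeatedly, as long as $G[V_i]$ contains a small set $U$ with too many neighbors \emph{inside $V_i$}, one uses the tree embedding (with leaves placed appropriately) to either produce a $C_n$ or show such a dense $U$ cannot exist; removing the at most $\alpha N$ ``bad'' vertices from each side leaves sets $V_0',V_1'$ of size $\geq(1-\alpha)N$ with no internal edges. I expect the main obstacle to be exactly this bookkeeping: getting the parities of the connecting path right so that \emph{both} leaf sets sit in the same part, simultaneously for both $V_0$ and $V_1$, while keeping the tree's bipartition sizes balanced enough for Theorem~\ref{balogh}, and making sure the numerical slack (the $0.033$, the $+28$, the factor $1-30\alpha$) is enough for all the expansion inequalities to go through at once. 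The underlying graph-theoretic content is routine given Lemmas~\ref{expander} and~\ref{joined graph}; the delicate part is assembling the single combinatorial gadget that handles the odd-parity obstruction and the stable-set extraction together.
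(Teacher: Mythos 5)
There is a genuine gap in Steps 2--3, and it is precisely the point that forces the paper to prove Theorem~\ref{forst-bipartite} (the rooted-forest embedding) rather than rely on Theorem~\ref{balogh}. Your construction embeds a single tree $T$ via Theorem~\ref{balogh} and concludes that its two leaf sets $X_0,X_1$ (both landing in $V_0'$, say) satisfy $e_G(X_0,X_1)=0$. But Theorem~\ref{balogh} gives you no control over \emph{which} vertices of $V_0'$ these leaf sets occupy; all you learn is that \emph{some} two disjoint subsets of $V_0$ of size $\ge\alpha N$ have no crossing edges. That conclusion is far too weak to yield alternative (ii): for example, if $G[V_0]$ were a disjoint union of two cliques of size $N/2$, there would be two large sets with no edges between them and yet no stable set of size anywhere near $(1-\alpha)N$. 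Your proposed ``peeling'' in Step~3 cannot close this gap, because each peeling step would again need to target a \emph{specific} dense set $U$, i.e.\ to place tree vertices at prescribed locations, which Theorem~\ref{balogh} does not permit. (Also note that two subsets both inside $V_0$ do not satisfy alternative (i) as stated, which requires one subset on each side, so you cannot fall back on (i) either.)

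The paper's proof resolves exactly this difficulty by a different gadget: it splits your tree $T$ into two rooted trees $T_0$ and $T_1$ (a binary tree, and a binary tree with a pendant path attached to its root), picks two \emph{arbitrary} vertices $x_0,x_1\in V_0'$, and uses Theorem~\ref{forst-bipartite} to embed $T_0,T_1$ \emph{disjointly with prescribed roots} $x_0,x_1$. The $\alpha$-joinedness of $G[V_0,V_1]$ then supplies an edge between the two leaf sets (which lie in opposite parts, by the odd parity of $n$), so if $x_0x_1$ were an edge of $G$ one would close a $C_n$. Hence every pair $x_0,x_1\in V_0'$ is non-adjacent, i.e.\ $V_0'$ itself is already the stable set of size $\ge(1-\alpha)N$ required by (ii) --- no peeling is needed. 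Your Steps 1 and the parity bookkeeping are fine; the missing ingredient is the ability to prescribe the roots, without which the argument does not reach (ii).
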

\begin{proof}
	\begin{figure}
		{\includegraphics[width=\textwidth]
			{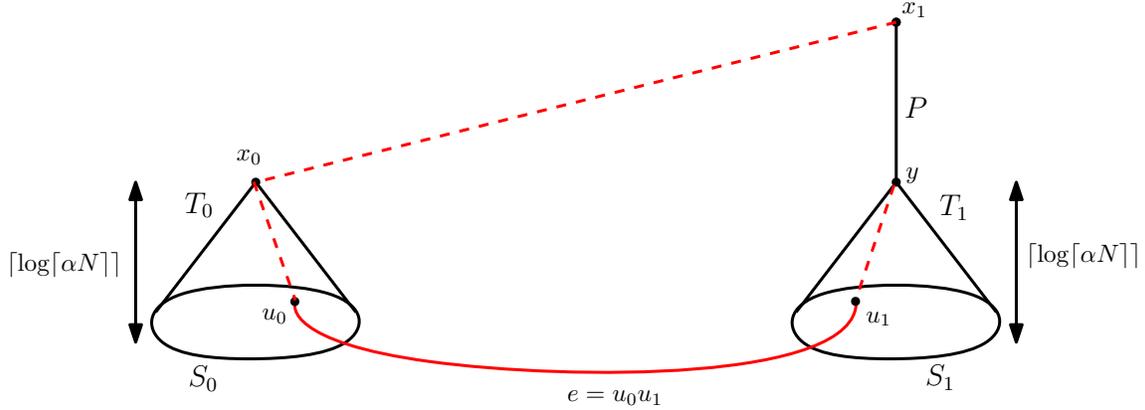}}
		\caption{The trees $T_0$ and $ T_1 $ which lead to the existence of a cycle $C_n$.}\label{fig2}
	\end{figure}
Let $ G $ be a graph which contains no copy of $ C_n $ and set $c=1/(1-30\alpha)$.  Also, fix two disjoint subsets of vertices $ V_0 $ and $ V_1 $ with $ |V_0|=|V_1|=N\geq c(n+28) $ and assume that Condition  (i) does not hold. We will prove that Condition  (ii)  holds. Let $ H$ be the bipartite subgraph of $G$ where $V(H)= V_0\cup V_1$ and $E(H)=E_G(V_0, V_1) $. Since Condition~(i)  does not hold, $ H $ is an $ \alpha $-joined bipartite graph. Using Lemma \ref{expander}, there is an induced subgraph $H'=(V_0',V_1',E')$ of $H$ with $ |V_0'|, |V_1'| \geq (1-\alpha)N$ such that $ H' $ satisfies the following expansion property: For every subset $ S\subseteq V_i' $, $i\in\{0,1\}$, if $ 0<|S|\leq \alpha N $, then we have $ |N_{H'}(S)|\geq ({1-2\alpha})/({2\alpha})|S| $ and if $  |S|>\alpha N $, then we have 
	$ |N_{H'}(S)|\geq ( 1-2\alpha)N  $.

Now, let $ T_{0} $ be a rooted binary tree with $\lceil \alpha N\rceil $ leaves and height $ \lceil \log \lceil\alpha N\rceil \rceil $ and at most $ 2 \lceil\alpha N\rceil$ vertices. Let $ P $ be a path of length $ n-2-2\lceil\log\lceil\alpha N\rceil \rceil $ with endpoints $ x $ and $ y $. Now, let 
$ T_{1} $ be the rooted tree rooted at $ x $ which is obtained by identifying the root of $T_0$ and the vertex $y$. We are going to use Theorem~\ref{forst-bipartite} to prove that $H'$ contains disjoint copies of $T_0$ and $T_1$. 

Note that, $|T_{0}|+ |T_{1}|\leq 4\lceil\alpha N\rceil+n $ and $ \Delta(T_0)=\Delta(T_1)= 3 $. Moreover, we have $ d_{root}(T_{0})=2 $ and $ d_{root}(T_{1})=1 $. 
Since $ 0<\alpha \leq 0.033 $ and $N\geq ({n+28})/(1-30\alpha)$,  we have
\begin{enumerate}
\item $ \dfrac{1-2\alpha}{2\alpha}\geq 8 $,
\item $ (1-2\alpha)N \geq 28\lceil\alpha N\rceil+n $.
\end{enumerate}
Also, let $x_0$ and $x_1$ be two arbitrary vertices in $V'_0$.
If we apply Theorem~\ref{forst-bipartite} by setting $ m= \lfloor\alpha N\rfloor $, $ M=4\lceil\alpha N\rceil+n $ and $ X=\{x_0, x_1\} $, then $ H' $ satisfies the expansion properties of Theorem~\ref{forst-bipartite} and thus contains disjoint copies of $ T_0 $ and $ T_1 $, where $T_i$ is rooted at $x_i$, $i\in\{0,1\}$.
Let $S_0$ and $S_1$ be the set of all leaves of embedded trees $T_0$ and $T_1$, respectively (note that, $x_1$ is also a leaf of $T_1$, however we remove $x_1$ from $S_1$). Since $n$ is odd, parity of height of $T_0$ and $T_1$ are different and so $S_0\subset V_0$ and $S_1\subset V_1$ or vice versa.  Since $ H $ is an $ \alpha $-joined bipartite graph and $|S_0|,|S_1|\geq \alpha N$, there is an edge $ e=u_0u_1\in E(H) $ where $ u_i\in S_i $, $i\in \{0,1\}$.  Now if 
$ x_0 $ and $ x_1 $ are adjacent, then we can construct a cycle $ C_n $ as follows: take the path from $ u_0 $ to $ x_0 $ in $ T_0 $, then move from $ x_0 $ to $ x_1 $ and then take the path from $ x_1 $ to $ u_1 $ in $ T_1 $ and return to $u_0$ by the edge $e$ (see Figure \ref{fig2}). Since $G$ contains no cycle $C_n$, this contradiction implies that $x_0$ and $x_1$ are not adjacent. 
Now, since $x_0$ and $x_1$ were chosen arbitrarily in $V_0'$, we have $e_G(V_0')=0$. Similarly, one can prove that $e_G(V_1')=0$. Hence, Condition (ii) holds and we are done. 
\end{proof}
Now, we are ready to prove the main result of this section which provides an upper bound for $ \hat{R}(C_n, r) $, when $ n $ is odd.
\begin{theorem}\label{Odd-SRN}
For every integer $ r\geq 2 $ and odd integer $ n $, we have $ \hat{R}(C_n, r)= O(r^22^{16 r^2})n $.
\end{theorem}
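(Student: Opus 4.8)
The plan is to exhibit a single quasi-random host graph and prove that it arrows $(C_n)_r$, the proof being an induction on $r$ in which each step uses Lemma~\ref{Beta-odd} to strip off one colour. Concretely, apply Lemma~\ref{quasi-graph} with $c_3$ an absolute constant (take $c_3=\alpha$, the constant from Lemma~\ref{Beta-odd}, so $c_3<1/30$), with $c_2=\Theta(r^2)$ chosen just large enough that $\delta>\sqrt{6\ln(c_1e/c_3)/(c_2c_3)}$ is compatible with $\delta<1$ (this forces $c_2\gtrsim\ln c_1$, and below $\ln c_1$ will be $\Theta(r^2)$), and with $c_1=2^{8r^2}$. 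For every sufficiently large $n$ this produces a graph $G$ on $N=c_1n$ vertices with $p=c_2/n$ for which $e(G)=\Theta(c_1^2c_2)\,n=O(r^2\,2^{16r^2})\,n$ and, crucially, $e_G(U,W)\geq(1-\delta)p|U||W|>0$ for all disjoint $U,W\subseteq V(G)$ with $|U|,|W|\geq c_3n$. It then suffices to show $G\longrightarrow(C_n)_r$; for $n$ bounded we instead invoke $\hat R(C_n,r)\leq\binom{R(C_n,r)}{2}=O(r^2)$ using the known linear bound on $R(C_n,r)$, exactly as in the even case.

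\textbf{Induction and base case.} I would prove the stronger statement that every graph $G'$ on at least $h(r)\,n$ vertices with the same quasi-randomness property ($e_{G'}(U,W)>0$ for all disjoint $U,W$ with $|U|,|W|\geq c_3n$) satisfies $G'\longrightarrow(C_n)_r$, where $h(1)$ is a suitable absolute constant and $h$ obeys a recurrence $h(r)\leq 2^{\Theta(r)}h(r-1)$, so that $h(r)=2^{\Theta(r^2)}$; induced subgraphs of $G$ inherit the property, which is what makes the induction go through. For $r=1$: split $V(G')$ into halves $V_0,V_1$; the property guarantees a cross-edge between any two equal subsets of $V_0,V_1$ of size $\geq\alpha|V_0|$, and forbids an independent set of size $\geq 2c_3n$ inside $V_0$; hence neither alternative of Lemma~\ref{Beta-odd} applied to $G'$ on $(V_0,V_1)$ can occur, and therefore $G'$ must contain $C_n$.

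\textbf{Inductive step.} Fix an $r$-colouring of $G'$ with classes $G'_1,\dots,G'_r$ and assume for contradiction that no class contains $C_n$. The goal is to find a set $Z\subseteq V(G')$ with $|Z|\geq h(r)n/2^{\Theta(r)}\geq h(r-1)n$ on which colour $r$ does not appear: then $G'[Z]$ again has the quasi-randomness property, is $(r-1)$-coloured, and the induction hypothesis gives a monochromatic $C_n$, a contradiction. To produce $Z$, repeatedly apply Lemma~\ref{Beta-odd} to the $C_n$-free graph $G'_r$: keeping a working pair of disjoint equal halves of the current vertex set, each application returns either alternative (ii), a subset of one half of relative size $\geq 1-\alpha$ that is $G'_r$-independent --- this is colour-$r$-free, so we set $Z$ equal to it and are done --- or alternative (i), two subsets of the two halves of relative size $\geq\alpha$ with no colour-$r$ edge between them; in the latter case we set one of them aside as a new ``block'' and iterate inside the other. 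After $k$ case-(i) steps we hold $k+1$ pairwise colour-$r$-separated blocks $W_0,\dots,W_k$, each of size $\geq(\alpha/2)^kN$, with all $G'$-edges between distinct blocks coloured in $\{1,\dots,r-1\}$.

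\textbf{The main obstacle.} The difficult point is that alternative (i) does not on its own cut the number of colours, and one cannot simply pass to the $(r-1)$-coloured bipartite graph of cross-edges, since a bipartite graph has no odd cycle --- this is exactly the obstruction recorded in the discussion following Theorem~\ref{Odd}. I would resolve it by bounding the number of case-(i) iterations by $k=\Theta(r)$: once $k$ reaches this value the $(r-1)$-coloured $(k+1)$-partite ``blow-up'' carried by $W_0,\dots,W_k$ has so many large parts that some colour is dense between a cyclically arranged family of at least three of them, and passing to the blow-up of that short cycle (legitimate since every $|W_i|\geq(\alpha/2)^{\Theta(r)}N\gg n$) produces a monochromatic $C_n$ of the required odd length --- the third block furnishes exactly the parity-correcting detour unavailable in a two-part structure, much as the auxiliary trees $T_0,T_1$ do in the proof of Lemma~\ref{Beta-odd}. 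Hence within $\Theta(r)$ case-(i) steps we either win outright or are forced into case (ii); in either event colour $r$ is localized at the price of a factor $(\alpha/2)^{\Theta(r)}=2^{-\Theta(r)}$ in the vertex count, giving $h(r)\leq 2^{\Theta(r)}h(r-1)$ and $h(r)=2^{\Theta(r^2)}$. A careful accounting of the constants (the loss at level $j$ being $(1/\alpha)^{\Theta(j)}$ and $\sum_{j\leq r}\Theta(j)=\Theta(r^2)$) yields $c_1=2^{8r^2}$, $c_2=O(r^2)$, and the advertised bound $\hat R(C_n,r)=O\!\big(r^2\,2^{16r^2}\big)n$.
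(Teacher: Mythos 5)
Your host graph, your base case, and the intended shape of the recursion ($h(r)\leq 2^{\Theta(r)}h(r-1)$, hence $2^{\Theta(r^2)}$) all match the paper, but the inductive step has a genuine gap at precisely the point you flag as ``the main obstacle''. You apply Lemma~\ref{Beta-odd} repeatedly to the \emph{single} colour class $G'_r$ and claim that after $k=\Theta(r)$ occurrences of alternative (i) the $k+1$ pairwise colour-$r$-separated blocks must carry some colour that is dense along a cyclically arranged family of an odd number ($\geq 3$) of blocks. This is false for $k=\Theta(r)$: assign to each pair of blocks a colour carrying at least a $1/(r-1)$ fraction of the edges between them; this is an $(r-1)$-colouring of $K_{k+1}$, and $K_m$ can be covered by $r-1$ complete bipartite graphs whenever $m\leq 2^{r-1}$ (label vertices by binary strings of length $r-1$ and take, for each coordinate, the complete bipartite graph between the two bit-values), in which case no colour class contains an odd cycle of blocks. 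So your procedure is only forced to stop after $2^{r-1}$ case-(i) steps, the loss per recursion level is $(\alpha/2)^{2^{r-1}}$ rather than $2^{-\Theta(r)}$, and the method as described yields a doubly exponential bound --- no better than the Javadi--Khoeini--Omidi--Pokrovskiy bound the theorem improves. (Even granting enough blocks, the step ``a blow-up of a short odd cycle of dense pairs contains $C_n$'' is a further unproved lemma, requiring both a dense-to-joined reduction and an embedding argument.)

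The paper sidesteps all of this with a different iteration: at step $i$ of the procedure it applies Lemma~\ref{Beta-odd} to colour class $H^i$, a \emph{different} colour each time, on the nested pair $(V_0^{i-1},V_1^{i-1})$. Because the pairs are nested, if alternative (i) occurs for every colour then after at most $r$ steps one holds two sets of size at least $\alpha^{r}\lfloor N'/2\rfloor\geq c_3n$ with no edges of \emph{any} colour between them, contradicting the quasi-randomness property \eqref{eq:hole} outright; no reduced-graph Ramsey argument is needed. If instead alternative (ii) occurs for some colour $t$, that colour is absent inside a set of size $(1-\alpha)\alpha^{t-1}\lfloor N'/2\rfloor$, and the induction on the number of colours applies there. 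Thus only $r$ applications of the lemma are made per level, each costing a factor $\alpha$, which is exactly what delivers $h(r)\leq 2^{O(r)}h(r-1)$ and the claimed $O(r^2 2^{16r^2})n$ bound. To close the gap you would need to replace your single-colour stripping by this rotation through the colours.
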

\begin{proof}
First, suppose that $n$ is bounded by a constant number $n_0$. By a result from \cite{Lucss}, we have $R(C_n,r)=O(r2^r n) $. Thus, $\hat{R}(C_n,r)\leq \binom{R(C_n,r)}{2} = O(r^22^{2r} n^2)= O(r^2 2^{2r})$. 

Now, we assume that $n$ is sufficiently large. Let $ G=G(V, E) $ be the graph obtained in Lemma~\ref{quasi-graph}, with $ N=2^{8r^2} n=c_1n $ and $ p=8 r^2/n=c_2/n $. Also, let $\varepsilon=\frac{1}{2}$, $ c_3= 6\ln 2 $ and $\sqrt{6{\ln(\frac{c_1e}{c_3})/(c_2c_3)}}<\delta <1 $. Note that such $\delta $ exists since $c_3> e$ and thus, $\sqrt{6{\ln(\frac{c_1e}{c_3})/(c_2c_3)}}<\sqrt{6{(\ln c_1)/(c_2c_3)}}=\sqrt{(6\ln 2) /c_3}=1$. Hence, for every two disjoint subsets $U,W\subset V$, 
\begin{equation}\label{eq:hole}
\text{if }|U|,|W|\geq c_3 n\text{, then } e_G(U,W)\geq (1-\delta)p|U||W|>0.
\end{equation}

Now, we prove that for every positive integer $s$ and for every induced subgraph $H$ of $G$, if $|V(H)|\geq 2^{8s^2}n  $, then $H\longrightarrow(C_n)_s $. To see this, we use induction on $s$. 
Let $s=1$ and $H$ be an induced subgraph of $G$ with $|V(H)|\geq 2^8 n$. We claim that $H$ contains a copy of $C_n$. For the contrary, suppose that $H$ contains no copy of $C_n$. Then, apply Lemma~\ref{Beta-odd} with $\alpha=0.033$ and $V_0, V_1$ be two disjoint subsets of size $\lfloor |V(H)|/2 \rfloor$. Since $|V(H)|/2\geq 2^{7}n \geq 100(n+28) $ (for sufficiently large $n$), there exist two disjoint subsets $V_0'$ and $V_1'$ such that either $|V_0'|=|V_1'|\geq \alpha 2^{7} n$ and $e_H(V_0',V_1')=0$, or $|V_0'|=|V_1'|\geq (1-\alpha) 2^{7} n$ and $e_H(V_0') =e_H(V_1')=0$. In the latter case, there exists two disjoint subsets $V_0''$ and $V_0'''$ of $V_0'$ such that $|V_0''|,|V_0'''|\geq (1-\alpha) 2^{6}$ and $e(V_0'',V_0''')=0$. Both cases contradicts \eqref{eq:hole}, because $\min\{\alpha 2^{7}, (1-\alpha)2^{6}\}\geq 6\ln 2 $. This completes the proof for the case $s=1$.

Now, suppose that $ s\geq 2 $ and let  $H$ be an induced subgraph of $G$ with $|V(H)|=N'\geq 2^{8s^2} n$ and consider an arbitrary $ s $-edge coloring of $ H $ with colors $ 1, \dots, s $.  Moreover, suppose that $ H^i $, $ i\in \{1, \dots, s\} $, stands for the spanning subgraph of $ H $ induced on all $ i $-colored edges. For the contrary, suppose that for each $i\in\{1,\ldots,s\}$, $H^i$ contains no copy of $C_n$.

Let $ V_0 $ and $ V_1 $ are  two disjoint arbitrary subsets of $V(H)$ such that 
$ |V_0|=|V_1|=\lfloor N'/2\rfloor\geq 100({n+28}) $.
We do the following procedure to find two nested sequences of subsets $V_0^t\subset V_0^{t-1} \subset \cdots \subset V_0^1\subset V_0^0=V_0$ and $V_1^t\subset V_1^{t-1} \subset \cdots \subset V_1^1\subset V_1^0=V_1$, for some integer $t\leq s$ such that for each $i\in\{0,\ldots,t\}$, $ |V_0^i|=|V_1^i|\geq \alpha^i \lfloor N'/2\rfloor $.

Suppose that the subsets $ V_0^{i-1} \subset \cdots \subset V_0^0=V_0$ and $V_1^{i-1} \subset \cdots \subset V_1^0=V_1$ are chosen. Now, at step $i$, apply Lemma~\ref{Beta-odd} on the graph $H^i$ and the vertex sets $V_0^{i-1}$ and $V_1^{i-1}$. Since $|V_0^{i-1}|=|V_1^{i-1}|=\alpha^{i-1} \lfloor N'/2\rfloor\geq \alpha^s 2^{8s^2-1} n \geq 100(n+28)$, we have, either

\begin{itemize}
	\item  [{\rm (i)}] there are two subsets $ V_0^i\subseteq V_0^{i-1} $ and $ V_1^i\subseteq V_1^{i-1}$ with $ |V_0^i|=|V_1^i|\geq\alpha |V_0^{i-1}|=\alpha |V_1^{i-1}| $ and $ e_{H^i}(V_0^i, V_1^i)=0 $, or
	\item  [{\rm (ii)}] there are two subsets $ V_0'\subseteq V_0^{i-1} $ and $ V_1'\subseteq V_1^{i-1}$ with $ |V_0'|=|V_1'|\geq(1-\alpha)|V_0^{i-1}|=(1-\alpha)|V_1^{i-1}| $ and
	$  e_{H^i}(V_0')=e_{H^i}(V_1')=0 $.
\end{itemize}
If (ii)  occurs, then stop the procedure. If (i) occurs, then go to step $ i+1 $ whenever $i<s$  and stop the procedure, otherwise.\\
Now, assume that the above procedure terminates in step $ t $, for some $t\leq s$. We consider the following two cases.
\vspace*{0.5 cm}\\
\textbf{Case 1.} $ t=s $.\vspace*{0.5cm}\\
\noindent In this case, we have two subsets $ V_0^s $ and $ V_1^s $  such that $|V_0^s|=|V_1^s|\geq \alpha^s \lfloor N'/2\rfloor$ and $ e_H(V_0^s, V_1^s)=0 $ which is in contradiction with \eqref{eq:hole}, since we have $ \alpha^s \lfloor N'/2\rfloor\geq \alpha^s 2^{8s^2-1}n \geq 6\ln 2n=c_3n $. \vspace*{0.5 cm}\\
\textbf{Case 2.} $ t\leq s-1 $.\vspace*{0.5cm}\\
\noindent In this case, (ii) occurs at step $t$ and so there are two subsets $ V_0'\subseteq V_0^{t-1} $ and $ V_1'\subseteq V_1^{t-1}$ with $ |V_0'|=|V_1'|\geq(1-\alpha) |V_0^{t-1}|\geq (1-\alpha) \alpha^{s-2}\lfloor N'/2\rfloor $ where $  e_{H^t}(V_0')=e_{H^t}(V_1')=0 $. Therefore, the graph $H'=H[V_0']$ contains no edge of color $t$ and its edges are colored by $s-1$ colors. On the other hand, $|V(H')|\geq (1-\alpha)\alpha^{s-2}2^{8s^2-1}n \geq 2^{8(s-1)^2}n$. Hence, by the induction hypothesis, $H'\longrightarrow (C_n)_{s-1}$. So, $H$ contains a monochromatic copy of $C_n$, a contradiction. This completes the proof of the claim.

Finally, applying the claim for $ H=G $, we have $G\longrightarrow (C_n)_r$ and so, $\hat{R}(C_n,r)\leq |E(G)|=O(c_1^2c_2) n= O(r^2 2^{16r^2})n $.
%
\end{proof}
\footnotesize

\appendix
\section{Proof of Theorem~\ref{forst-bipartite}} \label{app}
Here we give a proof of Theorem~\ref{forst-bipartite}. The proof is similar to the proof of Lemma~3.1 in \cite{Ramsey good}.

	The proof is by induction on $ \sum_{i=1}^{t}e(T_{x_i}) $. The initial case is when each tree is just a single vertex which holds by embedding $ T_{x_i} $ to $ x_i $. Now suppose that the lemma holds for all families of trees with $ \sum_{i=1}^{t}e(T_{x_i})<e $ and we have a
	family with $ \sum_{i=1}^{t}e(T_{x_i})=e>0 $. Without loss of generality, we may assume that $ e(T_{x_1} ) \geq 1 $. Let $ r $
	be the root of $ T_{x_1} $ and let $ c $ be one of its children. For every $ v\in  N_G(x_1) $ we define a set $ X^v =X\cup \{v\} $
	and a corresponding family of rooted trees $ \{T_{x}^v ~:~x\in X^v\} $ as follows. Let $ T_{x_1}^v $ be the tree rooted at $r$ obtained from  $ T_{x_1} $ by deleting $c$ and all of its descendants. Also, let $ T_{v}^v $ be the subtree of $ T_{x_1} $ rooted at $ c $ formed by $ c $ and all of its descendants. For all 
	$ x\in X^v\setminus\{x_1,v \} $, let $ T_{x}^v:=T_{x} $.\\
	We prove the following claim.
	\begin{claim}\label{CLM1}
		There is a vertex $ v\in N(x_1)\setminus X $ such that the set $ X^v $ together with the family of trees $ \{T_{x}^v ~:~x\in X^v\} $ satisfy the following for every $ S\subseteq V_i $, $ i\in \{0, 1\} $, with $ |S|\leq m $:
		\[
		|N(S)\setminus X^v|\geq 2\Delta |S\setminus X^v|+\sum_{x\in S\cap X^v}\big(d_{root}(T_x^v)+\Delta\big).
		\]
	\end{claim}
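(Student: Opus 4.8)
The plan is to prove Claim~\ref{CLM1} by a counting (averaging) argument over the neighbors of $x_1$, showing that for \emph{most} choices of $v\in N_G(x_1)\setminus X$ the modified family of trees still satisfies the hypothesis~\eqref{A.1}, hence at least one such $v$ works. First I would observe what changes when we pass from $(X,\{T_x\})$ to $(X^v,\{T^v_x\})$: the root degree of $T_{x_1}$ drops by one (we deleted the child $c$), a brand new root $v$ is added whose tree $T^v_v$ is the subtree hanging from $c$, and all other trees are untouched. So for a set $S$ with $|S|\le m$, the right-hand side of~\eqref{A.1} for the new family differs from the old one by: a $-1$ coming from $x_1$ if $x_1\in S$ (because $d_{\mathrm{root}}(T^v_{x_1})=d_{\mathrm{root}}(T_{x_1})-1$), a $+(d_{\mathrm{root}}(T^v_v)+\Delta)$ if $v\in S$ and $v\notin X$, and (since $X^v$ has one more element) possibly a change of $-2\Delta$ on the $|S\setminus X^v|$ term if $v\in S$. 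The left-hand side, $|N_G(S)\setminus X^v|$, can only drop by at most $1$ compared to $|N_G(S)\setminus X|$, and it drops only when $v\in N_G(S)$.

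The key case split is therefore on whether $v\in S$ or not. If $v\notin S$ but $v\in N_G(S)$: the LHS drops by at most $1$ while the RHS drops by at most $1$ (the $x_1$ term, if $x_1\in S$), so the inequality is preserved with room to spare unless it was tight; to handle tightness I would use the fact that the original inequality~\eqref{A.1} has a ``$+1$'' built in is \emph{not} there in this version, so instead I rely on the $2\Delta$-expansion being genuinely larger than what is demanded by trees of maximum degree $\Delta$, i.e. there is slack $\Delta|S\setminus X|$ to absorb a loss of $1$ whenever $S\setminus X\ne\emptyset$; the residual subcase $S\subseteq X$ is finite and handled directly since the trees and $X$ are fixed. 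If $v\in S$: then $v$ contributes $d_{\mathrm{root}}(T^v_v)+\Delta\le |T^v_v|-1+\Delta$ to the RHS but also removes a $2\Delta$ from the $|S\setminus X^v|$ count relative to $|S\setminus X|$, and crucially $v$'s neighbors other than those already used were being counted in $N_G(S)\setminus X$ before — here the structure of the tree-embedding argument in \cite{Ramsey good} is that $d_{\mathrm{root}}(T^v_v)+\Delta\le 2\Delta + (\text{contribution } T^v_v \text{ would need as part of }T_{x_1})$, i.e. splitting off a subtree never increases the total demand. I would make this precise by checking that for every $S$, $\sum_{x\in S\cap X^v}(d_{\mathrm{root}}(T^v_x)+\Delta)+2\Delta|S\setminus X^v| \le \sum_{x\in S\cap X}(d_{\mathrm{root}}(T_x)+\Delta)+2\Delta|S\setminus X| + [\,v\in N_G(S)\,]$ fails for at most a bounded number of ``bad'' vertices $v$, and then average.

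To turn ``most $v$ work'' into ``some $v$ works'', I would bound the number of bad vertices $v\in N_G(x_1)\setminus X$: a vertex $v$ is bad only if there exists $S$ with $|S|\le m$, $v\in S$, for which the new inequality fails; but $\sum_{x\in S}|T^v_x|\le \sum_{x\in S}|T_x|\le M$ and the expansion hypothesis~(2) of Theorem~\ref{forst-bipartite} (applied to sets of size between $m$ and $2m$) forces $|N_G(x_1)|$ to be large enough — at least $M+8\Delta m$ once $\{x_1\}$ is grown — that deleting the at most $M+O(\Delta m)$ bad vertices still leaves a valid choice. The main obstacle, and where I expect to spend the most care, is precisely this bookkeeping in the $v\in S$ case: making sure that redistributing part of $T_{x_1}$ to a new root $v$ does not create a set $S$ whose neighborhood demand now exceeds what $G$ supplies, i.e. verifying the ``monotonicity'' inequality above uniformly in $S$. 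Once Claim~\ref{CLM1} is established, the theorem follows by applying the induction hypothesis to $(X^v,\{T^v_x\})$, which has strictly smaller total number of edges, to obtain disjoint embeddings of all the $T^v_x$; gluing the embedded $T^v_{x_1}$ and $T^v_v$ back together along the edge $x_1v$ (which exists since $v\in N_G(x_1)$) reconstitutes an embedding of $T_{x_1}$ rooted at $x_1$, and the remaining trees are already embedded, so all of $T_{x_1},\dots,T_{x_t}$ are found disjointly. Condition~(2) of the theorem is preserved through the induction because $M$ and $m$ do not change.
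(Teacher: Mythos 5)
Your bookkeeping at the start is right and matches the beginning of the actual argument: passing from $(X,\{T_x\})$ to $(X^v,\{T_x^v\})$ can only hurt when $v\in N(S)$, $v\notin S$, $x_1\notin S$, and \eqref{A.1} is tight for $S$. But both mechanisms you propose for handling such tight sets fail. First, there is no ``slack $\Delta|S\setminus X|$'' to absorb the loss of $1$: the hypothesis supplies exactly $2\Delta|S\setminus X|+\sum_{x\in S\cap X}(d_{root}(T_x)+\Delta)$ and the conclusion of Claim~\ref{CLM1} demands the same coefficient $2\Delta$ (it must, because the induction has to hand the invariant down verbatim to the next step), so a tight set with $v\in N(S)$ and $v,x_1\notin S$ genuinely violates the new inequality; you cannot borrow the ``extra'' $\Delta$ that distinguishes $2\Delta$ from the degree bound of the trees. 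Second, the averaging step has no teeth: a single tight set $C$ with $x_1\notin C$ simultaneously rules out every $v\in N(C)\cap N(x_1)\setminus X$, and nothing in the hypotheses bounds $|N(x_1)\setminus X|$ from below by more than $d_{root}(T_{x_1})+\Delta$ (condition~2 of Theorem~\ref{forst-bipartite} applies only to sets of size at least $m$, not to $\{x_1\}$), so one cannot conclude that ``few'' $v$ are bad by counting them one at a time.

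The missing idea is the critical-set argument. Call $S$ \emph{critical} if $|S|\le m$ and \eqref{A.1} holds with equality. Assuming for contradiction that every $v\in N(x_1)\setminus X$ admits a counterexample set $C_v$, one first shows (Claim~\ref{CLM2}) that each $C_v$ is critical, satisfies $v\in N(C_v)$, and avoids $x_1$. The decisive step (Claim~\ref{CLM3}) is that critical sets are closed under union: this uses submodularity of $S\mapsto |N(S)\setminus X|$ together with hypothesis~2 to force $|S\cup T|<m$, so that \eqref{A.1} still applies to the union. Taking $C=\bigcup_{v} C_v$ then produces one critical set with $N(x_1)\setminus X\subseteq N(C)\setminus X$ and $x_1\notin C$; adjoining $x_1$ to $C$ raises the right-hand side of \eqref{A.1} by $d_{root}(T_{x_1})+\Delta$ without enlarging the left-hand side, which forces $|C\cup\{x_1\}|=m+1$ and then contradicts hypothesis~2. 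Without this union/submodularity step (or a substitute for it) the claim does not follow. The remainder of your outline --- gluing the embeddings of $T^v_{x_1}$ and $T^v_v$ along the edge $x_1v$ and inducting on the total number of edges --- is correct and is exactly how the paper concludes.
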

	Note that, if Claim \ref{CLM1} is true, then by induction we have an embedding of $ T_{x_1}^v, \dots, T_{x_t}^v, T_{v}^v $ into $ G $ and by adding the edge $ x_1v $, we can join the trees $ T_{x_1}^v $ and $ T_{v}^v $ in order to obtain a copy of $ T_{x_1} $ rooted at $ x_1 $. This embedding completes the proof. So it suffices to prove Claim \ref{CLM1}.\\
	
	\textit{Proof of Claim {\rm \ref{CLM1}}}. 
	By the contrary, assume that for every $ v\in N(x_1)\setminus X $ there is a set $ C_v $ with $ |C_v|\leq m $ and 
	\begin{align}\label{A.2}
	|N(C_v)\setminus X^v|\leq 2\Delta |C_v\setminus X^v|+\sum_{x\in C_v\cap X^v}\big(d_{root}(T_x^v)+\Delta\big)-1.	
	\end{align}
	Notice that taking $ S=\{x_1\} $, \eqref{A.1} implies that $ x_1 $ has at least one neighbor outside $ X $. 
	 
	Define a set of vertices $ S $ to be \textit{critical} if $|S|\leq m$ and equality holds in \eqref{A.1}.
	We prove the following claims.
	\begin{claim}\label{CLM2}
		For every $ v\in N(x_1)\setminus X $, the set $ C_v $ is critical. Also, $ v\in N(C_v) $ and $ x_1\notin C_v $.
	\end{claim}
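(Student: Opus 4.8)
The plan is to confront the hypothetical witness $C_v$ of \eqref{A.2} with the expansion hypothesis \eqref{A.1} applied to $S=C_v$ (legitimate since $|C_v|\le m$), and to observe that this leaves essentially no slack. First I would write $A:=|N(C_v)\setminus X|$ and $B:=2\Delta\,|C_v\setminus X|+\sum_{x\in C_v\cap X}(d_{root}(T_x)+\Delta)$, so that \eqref{A.1} reads $A\ge B$; the bulk of the work is then to re-express each term of \eqref{A.2} through $A$, $B$, and a handful of indicator quantities.

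Since $v\in N(x_1)\setminus X$, the set $X^v$ differs from $X$ only by the extra vertex $v\notin X$, so $|N(C_v)\setminus X^v|=A-\mathbf{1}[v\in N(C_v)]$ and $|C_v\setminus X^v|=|C_v\setminus X|-\mathbf{1}[v\in C_v]$. For the tree contributions, only two of the rooted trees change: $T_{x_1}^v$ is $T_{x_1}$ with the child $c$ of its root and all descendants of $c$ deleted, so $d_{root}(T_{x_1}^v)=d_{root}(T_{x_1})-1$; and $T_v^v$ is rooted at $c$, so $d_{root}(T_v^v)=d_{T_{x_1}}(c)-1\le\Delta-1$. Hence
\[
\sum_{x\in C_v\cap X^v}\bigl(d_{root}(T_x^v)+\Delta\bigr)=\sum_{x\in C_v\cap X}\bigl(d_{root}(T_x)+\Delta\bigr)-\mathbf{1}[x_1\in C_v]+\mathbf{1}[v\in C_v]\bigl(d_{root}(T_v^v)+\Delta\bigr),
\]
the term $-\mathbf{1}[x_1\in C_v]$ recording the drop in the root degree of $T_{x_1}$, and the last term recording that $v$ may become a new element of $C_v\cap X^v$.

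Substituting these into \eqref{A.2}, cancelling $B$ against the bulk terms, and using $A\ge B$, I expect to be left with the single inequality
\[
1\ \le\ \mathbf{1}[v\in N(C_v)]-\mathbf{1}[x_1\in C_v]+\mathbf{1}[v\in C_v]\bigl(d_{root}(T_v^v)-\Delta\bigr),
\]
in which the coefficient $d_{root}(T_v^v)-\Delta$ is at most $-1$. If $v\in C_v$ the right-hand side is at most $1+0-1=0$, which is impossible; hence $v\notin C_v$, and the inequality collapses to $1\le \mathbf{1}[v\in N(C_v)]-\mathbf{1}[x_1\in C_v]\le 1$. This forces $v\in N(C_v)$ and $x_1\notin C_v$; moreover every inequality in the chain must then be an equality, so in particular \eqref{A.1} holds with equality for $S=C_v$, i.e.\ $C_v$ is critical. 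All three conclusions of Claim~\ref{CLM2} follow at once.

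The one genuinely delicate point is the bookkeeping behind the two displayed identities and their cancellation. One must keep straight that $v$ lies outside $X$, so it can enter $C_v\cap X^v$ only as a \emph{new} member carrying weight $d_{root}(T_v^v)+\Delta$; that the weight of $x_1$ in the tree sum drops by exactly $1$ when $x_1\in C_v$, because a single child of the root of $T_{x_1}$ has been removed; and that the $-2\Delta$ produced by $|C_v\setminus X^v|$ combines with the $+\Delta$ attached to $v$ to leave precisely the negative coefficient $d_{root}(T_v^v)-\Delta\le -1$ on which the whole argument hinges. The notion of being \emph{critical} is set up exactly so that, once the dust settles, this last inequality degenerates into an equality.
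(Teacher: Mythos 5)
Your proposal is correct and is essentially the paper's own argument: both confront the assumed inequality \eqref{A.2} for $C_v$ with \eqref{A.1} applied to $S=C_v$, using the same three observations (passing from $X$ to $X^v$ changes the counts by at most the indicators of $v$, the root degree of $T_{x_1}$ drops by exactly one, and $d_{root}(T_v^v)\le\Delta-1$), and conclude by forcing equality throughout. The only cosmetic difference is that you package the bookkeeping in indicator functions while the paper writes it as a chain of three inequalities; your version also makes explicit the (true but unstated in the paper) byproduct that $v\notin C_v$.
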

	\begin{proof}
		Using inequality \eqref{A.1}, we have
		\begin{align}
		|N(C_v)\setminus X^v|&\geq |N(C_v)\setminus X|-1 \label{A.3}\\
		&\geq 2\Delta |C_v\setminus X|+\sum_{x\in C_v\cap X}\big(d_{root}(T_x)+\Delta\big)-1\label{A.4}\\
		&\geq 2\Delta |C_v\setminus X^v|+\sum_{x\in C_v\cap X^v}\big(d_{root}(T^v_x)+\Delta\big)-1 \label{A.5}
		\end{align}
		Adding inequality \eqref{A.2}, implies that equality holds in each of \eqref{A.3}, (\ref{A.4}) and (\ref{A.5}). Clearly, equality in \eqref{A.4} implies that the set $ C_v $ is critical. Moreover, for equality in \eqref{A.3} implies that $ v\in N(C_v) $. For equality in (\ref{A.5}) to be hold, we must have $ x_1\notin C_v $ (since we have $ d_{root}(T_{x_1}^{v})=d_{root}(T_{x_1})-1 $).
	\end{proof}
	We will also need the following claim.
	\begin{claim}\label{CLM3} 
		For two critical sets $ S $ and $ T $, the union $ S\cup T $ is critical.
	\end{claim}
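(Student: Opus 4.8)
The plan is to encode the expansion hypothesis \eqref{A.1} as the nonnegativity of a single submodular set function, and then read off closure of critical sets under union from submodularity in the usual way. For a subset $S$ of a fixed part $V_i$ of $G$, define the \emph{deficiency}
\[
f(S)\;=\;|N_G(S)\setminus X|\;-\;2\Delta\,|S\setminus X|\;-\;\sum_{x\in S\cap X}\bigl(d_{root}(T_x)+\Delta\bigr).
\]
Then \eqref{A.1} says precisely that $f(S)\ge 0$ whenever $|S|\le m$, and $S$ is critical exactly when $|S|\le m$ and $f(S)=0$ (in particular $f(\emptyset)=0$, so $\emptyset$ is critical). Throughout we take $S,T\subseteq V_i$ for a common $i$, which is the only situation in which the claim is invoked.

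First I would establish the submodularity inequality $f(S\cup T)+f(S\cap T)\le f(S)+f(T)$ for all $S,T\subseteq V_i$. Put $A=N_G(S)\setminus X$ and $B=N_G(T)\setminus X$. Since $N_G(S\cup T)=N_G(S)\cup N_G(T)$ and $N_G(S\cap T)\subseteq N_G(S)\cap N_G(T)$, we have $N_G(S\cup T)\setminus X=A\cup B$ and $N_G(S\cap T)\setminus X\subseteq A\cap B$, so that
\[
|N_G(S\cup T)\setminus X|+|N_G(S\cap T)\setminus X|\;\le\;|A\cup B|+|A\cap B|\;=\;|A|+|B|.
\]
The two subtracted terms are \emph{modular}: $|(S\cup T)\setminus X|+|(S\cap T)\setminus X|=|S\setminus X|+|T\setminus X|$, and, writing $g(S)=\sum_{x\in S\cap X}(d_{root}(T_x)+\Delta)$, also $g(S\cup T)+g(S\cap T)=g(S)+g(T)$; both are immediate from inclusion--exclusion on the finite set $X$. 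Combining these two identities with the displayed inequality yields submodularity of $f$.

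Next, let $S,T$ be critical, so $f(S)=f(T)=0$, and hence $f(S\cup T)+f(S\cap T)\le 0$. Since $|S\cap T|\le|S|\le m$, hypothesis \eqref{A.1} gives $f(S\cap T)\ge 0$, so $f(S\cup T)\le 0$. If in addition $|S\cup T|\le m$, then \eqref{A.1} also gives $f(S\cup T)\ge 0$, forcing $f(S\cup T)=0$ (and $f(S\cap T)=0$); in particular $S\cup T$ is critical, as claimed. To see that $|S\cup T|\le m$ always holds, note $|S\cup T|\le|S|+|T|\le 2m$, so it suffices to exclude the range $m<|S\cup T|\le 2m$: there the level-$2m$ condition (condition~2 of Theorem~\ref{forst-bipartite}) gives $|N_G(S\cup T)|\ge M+8\Delta m$, and since $g(S\cup T)\le M+t\Delta$ and $2\Delta|(S\cup T)\setminus X|\le 4\Delta m$, one obtains $f(S\cup T)\ge 4\Delta m-t(1+\Delta)>0$ in the regime of interest (where $m$ is large relative to $t=|X|$), contradicting $f(S\cup T)\le 0$.

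The main obstacle is exactly this size-control step: submodularity delivers $f(S\cup T)\le 0$ essentially for free, but upgrading ``nonpositive deficiency'' to ``critical'' forces one to rule out that $|S\cup T|$ lies strictly between $m$ and $2m$, which is precisely what the second (level-$2m$) expansion hypothesis of Theorem~\ref{forst-bipartite} is there to handle; the only other place needing care is the bookkeeping with the exceptional set $X$ inside the neighbourhood term, which is what the $A,B$ notation above organizes.
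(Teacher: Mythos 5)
Your argument is the paper's own proof recast in the language of submodular set functions: the submodularity inequality for your deficiency $f$ is exactly the chain \eqref{A.6}--\eqref{A.12}, and the exclusion of $m<|S\cup T|\le 2m$ via the level-$2m$ hypothesis is precisely the paper's concluding step, so the route is essentially identical (and the repackaging is clean). The one step you must tighten is the final numerical estimate: you bound $g(S\cup T)\le M+t\Delta$ and conclude $f(S\cup T)\ge 4\Delta m-t(1+\Delta)>0$ only ``where $m$ is large relative to $t$'', but no such relation between $m$ and $t=|X|$ appears among the hypotheses of Theorem~\ref{forst-bipartite}, so as written this leaves a hole in the generality in which the claim is stated. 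The repair is to use $d_{root}(T_x)\le\Delta$ (immediate from $\Delta(T_x)\le\Delta$), which gives $g(S\cup T)\le 2\Delta\,|(S\cup T)\cap X|$ and hence
\[
2\Delta\,|(S\cup T)\setminus X|+g(S\cup T)\;\le\;2\Delta\,|S\cup T|\;\le\;4\Delta m ;
\]
combined with condition~2 this yields $f(S\cup T)\ge (M+8\Delta m-t)-4\Delta m=M-t+4\Delta m>0$ unconditionally (since $t\le\sum_i|T_{x_i}|\le M$), contradicting $f(S\cup T)\le 0$. With that adjustment your proof is complete and coincides with the paper's.
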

	\begin{proof}
		First notice that since $ S $ and $ T $ are critical, we have
		\begin{align}
		|N(S)\setminus X|=2\Delta | S\setminus X|+ \sum_{x\in S\cap X}\big(d_{root}(T_x)+\Delta\big),\label{A.6}\\
		|N(T)\setminus X|=2\Delta | T\setminus X|+ \sum_{x\in T\cap X}\big(d_{root}(T_x)+\Delta\big).\label{A.7}
		\end{align}
		Moreover, by applying Inequality \eqref{A.1} to $ S\cap T $ (which is smaller than $ m $ since $ S $ is critical) we have
		\begin{align}
		|N(S\cap T)\setminus X|\geq2\Delta | S\cap T\setminus X|+ \sum_{x\in S\cap T\cap X}\big(d_{root}(T_x)+\Delta\big).\label{A.8}
		\end{align}
		Also, note that by inclusion-exclusion principle, we have
		\begin{align}
		|(S\cup T)\setminus X|=|S\setminus X|+|T\setminus X|-|(S\cap T)\setminus X|, \label{A.9}
		\end{align}
		and
		\begin{align}
		\sum_{x\in (S\cup T)\cap X}\big(d_{root}(T_x)+\Delta\big)= \sum_{x\in S\cap X}\big(d_{root}(T_x)+\Delta\big)+\sum_{x\in T\cap X}\big(d_{root}(T_x)+\Delta\big)\nonumber\\
		-\sum_{x\in S\cap T\cap X}\big(d_{root}(T_x)+\Delta\big). \label{A.10}
		\end{align}
		Moreover,  we observe that 
		\begin{align*}
		|N(S\cup T)\setminus X|=|\big(N(S)\cup N(T)\big)\setminus X|,&\\
		|N(S\cap T)\setminus X|\leq|\big(N(S)\cap N(T)\big)\setminus X|,&
		\end{align*}
		which together with inclusion-exclusion principle implies
		\begin{align}
		|N(S\cup T)\setminus X|\leq |N(S)\setminus X|+|N(T)\setminus X|-|N(S\cap T)\setminus X|.\label{A.11}
		\end{align}
		Plugging (\ref{A.6}), (\ref{A.7}) and (\ref{A.8}) into (\ref{A.11}) and then using (\ref{A.9}) and (\ref{A.10}) gives
		\begin{align}
		|N(S\cup T)\setminus X|\leq 2\Delta|(S\cup T)\setminus X|+\sum_{x\in (S\cup T)\cap X}\big(d_{root}(T_x)+\Delta\big).\label{A.12}
		\end{align}
		Since both $ S $ and $ T $ are critical, we have $ |S\cup T|\leq 2m $, which together with (\ref{A.12})
		implies that 
		\[
		|N(S\cup T)|\leq |X|+|N(S\cup T)\setminus X|\leq |X|+4\Delta m+4\Delta m<M+8\Delta m.
		\]
		Therefore, by Condition~2 in the statement of Theorem~\ref{forst-bipartite}, we have $ |S\cup T|< m $. So, (\ref{A.1}) holds for the set $ S\cup T $, which together with (\ref{A.12}) implies that $ S\cup T $ is critical.
	\end{proof}
	We now resume the proof of Claim \ref{CLM1}. Let $ C=\cup_{v\in N(x_1)\setminus X} C_v $. By the above two claims, $ C $ is critical. On the other hand,  Claim \ref{CLM2} implies that $ N(x_1)\setminus X \subseteq N(C)\setminus X$ and $ x_1 \notin C$, so we have 
	\begin{align*}
	|N(C\cup \{x_1\})\setminus X|&=|N(C)\setminus X|\\
	&=2\Delta |C\setminus X|+\sum_{x\in C\cap X}\big(d_{root}(T_x)+\Delta\big)\\
	&<2\Delta |C\setminus X|+\sum_{x\in C\cap X}\big(d_{root}(T_x)+\Delta\big)+d_{root}(T_{x_1})+\Delta\\
	&=2\Delta |(C\cup \{x_1\})\setminus X| + \sum_{x\in (C\cup \{x_1\})\cap X}\big(d_{root}(T_x)+\Delta\big).
	\end{align*}
	By Inequality \eqref{A.1} we have $ |C\cup \{x_1\}|>m $, which combined with $ C $ being critical means that $ |C\cup \{x_1\}|=m+1 $. But then we have 
	\begin{align*}
	|N(C \cup \{x_1\})|\leq |X|+|N(C \cup \{x_1\})\setminus X|&\leq |X|+2\Delta m + 2\Delta (m+1)\\
	&\leq M+6\Delta m
	\end{align*}
	which contradicts the assumption of the theorem that $ |N(C \cup \{x_1\})|\geq M+8\Delta m $. Hence, the proof of Claim \ref{CLM1} and so the proof of Theorem \ref{forst-bipartite} is complete.  
\end{document}